\newtheorem{theorem}{Theorem}
\theoremstyle{plain}
\newtheorem{corollary}[theorem]{Corollary}
\newtheorem{lemma}[theorem]{Lemma}
\numberwithin{equation}{section}
\newcommand{\R}{\mathbb{R}}
\newcommand{\C}{\mathbb{C}}
\newcommand{\mS}{\mathbb{S}}
\renewcommand{\phi}{\varphi}
\newcommand{\bpm}{{\begin{pmatrix}}}
\renewcommand{\phi}{\varphi}
\DeclareMathOperator{\divergenz}{div}
\DeclareMathOperator{\const}{const.}
\DeclareMathOperator{\modd}{mod}
\def\blem{\begin{lemma}}\def\elem{\end{lemma}}
\def\bthm{\begin{theorem}}\def\ethm{\end{theorem}}
\def\bcor{\begin{corollary}}\def\ecor{\end{corollary}}
\def\beq{\begin{equation}}\def\eeq{\end{equation}}
\begin{document}

\title[A breather construction for a semilinear curl-curl wave equation]{A breather construction for a semilinear curl-curl wave equation with radially symmetric coefficients}

\author{Michael Plum}
\address{M. Plum \hfill\break 
Institute for Analysis, Karlsruhe Institute of Technology (KIT), \hfill\break
D-76128 Karlsruhe, Germany}
\email{michael.plum@kit.edu}

\author{Wolfgang Reichel}
\address{W. Reichel \hfill\break 
Institute for Analysis, Karlsruhe Institute of Technology (KIT), \hfill\break
D-76128 Karlsruhe, Germany}
\email{wolfgang.reichel@kit.edu}

\date{\today}

\subjclass[2000]{Primary: 35L71; Secondary: 34C25}
\keywords{semilinear wave-equation, breather, phase plane method}

\begin{abstract} We consider the semilinear curl-curl wave equation 
$s(x) \partial_t^2 U  +\nabla\times\nabla\times U + q(x) U \pm V(x) |U|^{p-1} U = 0 \mbox{ for } (x,t)\in \R^3\times\R$. For any $p>1$ we prove the existence of time-periodic spatially localized real-valued solutions (breathers) both for the $+$ and the $-$ case under slightly different hypotheses. Our solutions are classical solutions that are radially symmetric in space and decay exponentially to $0$ as $|x|\to \infty$. Our method is based on the fact that gradient fields of radially symmetric functions are annihilated by the curl-curl operator. Consequently, the semilinear wave equation is reduced to an ODE with $r=|x|$ as a parameter. This ODE can be efficiently analyzed in phase space. As a side effect of our analysis, we obtain not only one but a full continuum of phase-shifted breathers $U(x,t+a(x))$, where $U$ is a particular breather and $a:\R^3\to\R$ an arbitrary radially symmetric $C^2$-function. 
\end{abstract}
\maketitle


\section{Introduction}
Real-valued breathers (i.e., time-periodic spatially localized solutions) of nonlinear wave equations in $\R^d\times\R$ have attracted attention of both physicists and mathematicians with the sine-Gordon equation \eqref{sine_gordon} being a prominent example. The phenomenon of existence of breathers is quite rare. In the context of semilinear scalar $1+1$-dimensional wave equations (different from sine-Gordon) we are only aware of the example given in \cite{blank_bruckner_lescarret_schneider:11}. Breathers in discrete nonlinear lattice equations are more common, cf. \cite{mackay_aubry} for a fundamental result and \cite{james_breathers:09} for an overview with many references. Complex valued time-harmonic breathers of the type $u(x,t) = e^{i\omega t} \pmb{u}(x)$ in wave equations with $\mS^1$-equivariant nonlinearity are very well studied objects, cf. \cite{berestycki_lions}, \cite{strauss:1977}. Such time-harmonic breathers are usually much easier to obtain, see Theorem~\ref{ex_complex} below, and they are the object of many papers in the context of the nonlinear Schr\"odinger equation, e.g. in the case of potentials with spatial periodicity \cite{alama_li:1992}, \cite{pankov:2005}.

\medskip

In this paper we consider the $3+1$-dimensional semilinear curl-curl wave equation
\begin{equation}
\usetagform{pm}
\label{semilinear}
s(x) \partial_t^2 U  +\nabla\times\nabla\times U + q(x) U \pm V(x) |U|^{p-1} U = 0 \mbox{ for } (x,t)\in \R^3\times\R
\end{equation}
with $p>1$. We will assume later that $V,q,s:\R^3\to (0,\infty)$ are positive, radially symmetric functions. We look for classical real-valued solutions $U:\R^3\times \R\to \R^3$ which are $T$-periodic in time and spatially exponentially localized, i.e., $\sup_{\R^3\times \R} |U(x,t)| e^{\delta |x|}< \infty$ for some $\delta>0$. We consider both the case of the coefficient $+V(x)$ and $-V(x)$ in front of the nonlinearity. In both cases we have existence results which differ in only one hypothesis. Concerning real-valued breathers of \eqref{semilinear} we are not aware of any other existence result. Our results are as follows. For a function $f: \R^3\to\R$ we say $f(x)\to 0$ in the $C^2$-sense as $x\to 0$ if $f(x), \nabla f(x), D^2 f(x)\to 0$ as $x\to 0$. 

\begin{theorem} Suppose $s,q,V:\R^3\to (0,\infty)$ are radially symmetric $C^2$-functions and let $T=2\pi \sqrt{\frac{s(0)}{q(0)}}$. Assume
\begin{itemize}
\item[(H1)] $T\sqrt{\frac{q(x)}{s(x)}} <  2\pi$ for all $x\in \R^3\setminus\{0\}$,
\item[(H2)] $ \left|2\pi - T\sqrt{\frac{q(x)}{s(x)}}\right|^\frac{1}{p-1} \to 0$ in the $C^2$-sense as $x\to 0$,
\item[(H3)] $\sup_{x\in \R^3} \left|2\pi - T\sqrt{\frac{q(x)}{s(x)}} \right| e^{\delta(p-1)|x|} < \infty$ for some $\delta>0$,
\item[(H4)] $\sup_{x\in\R^3} \frac{q(x)}{V(x)}< \infty$.
\end{itemize}
Then there exists a $T$-periodic $\R^3$-valued breather solution $U$ of \eqref{semilinear}$_+$ with the property $\sup_{\R^3\times\R} |U(x,t)|e^{\delta|x|}<\infty$. The breather $U$ generates a continuum of phase-shifted breathers $U_a(x,t)=U(x,t+a(x))$ where $a:\R^3\to \R$ is an arbitrary radially symmetric $C^2$-function. 
\label{ex_plus}
\end{theorem}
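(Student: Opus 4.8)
The plan is to solve \eqref{semilinear}$_+$ by the ansatz $U(x,t)=v(|x|,t)\,\frac{x}{|x|}$, which for each fixed $t$ is the gradient of the radially symmetric function $x\mapsto\int_0^{|x|}v(\rho,t)\,d\rho$ and is therefore annihilated by $\nabla\times\nabla\times$. Since $s,q,V$ are radial and $|U|=|v|$, the vector equation collapses, pointwise in $r=|x|>0$, to the scalar ODE
$$s(r)\,\partial_t^2 v+q(r)\,v+V(r)\,|v|^{p-1}v=0 ,$$
with $r$ as a parameter. It then remains to produce for every $r\ge0$ a solution $v(r,\cdot)$ that is $T$-periodic in $t$, with $v(0,\cdot)\equiv0$, enough joint regularity in $(r,t)$ for $U$ to be a $C^2$ field up to and including $x=0$, and exponential spatial decay $|v(r,t)|\le Ce^{-\delta r}$.

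The heart of the argument is a scaling that removes the $r$-dependence. Substituting $v(r,t)=\big(q(r)/V(r)\big)^{1/(p-1)}\,W\big(\sqrt{q(r)/s(r)}\,t\big)$ reduces the ODE to the single universal equation $W''+W+|W|^{p-1}W=0$. This is a planar Hamiltonian system with energy $\tfrac12(W')^2+\tfrac12W^2+\tfrac1{p+1}|W|^{p+1}$, all of whose orbits are periodic; let $W_B$ denote the solution with $W_B(0)=B\ge0$, $W_B'(0)=0$, and $\Pi(B)$ its minimal period. A standard time-map computation, using that $W\mapsto\big(W+|W|^{p-1}W\big)/W=1+|W|^{p-1}$ is strictly increasing for $p>1$, shows that $\Pi$ is strictly decreasing with $\Pi(0^+)=2\pi$ and $\Pi(B)\to0$ as $B\to\infty$ (it is here, for the vanishing of the period, that $p>1$ enters). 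Hence $\Pi\colon[0,\infty)\to(0,2\pi]$ is a decreasing bijection. By (H1), $T\sqrt{q(r)/s(r)}\in(0,2\pi]$ for all $r$, and equals $2\pi$ precisely at $r=0$ because $T=2\pi\sqrt{s(0)/q(0)}$; thus there is a unique $B(r)\ge0$ with $\Pi(B(r))=T\sqrt{q(r)/s(r)}$ and $B(0)=0$. Putting $v(r,t):=\big(q(r)/V(r)\big)^{1/(p-1)}W_{B(r)}\big(\sqrt{q(r)/s(r)}\,t\big)$ then gives a solution of the reduced ODE whose period in $t$ is $\Pi(B(r))/\sqrt{q(r)/s(r)}=T$.

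For the regularity, smooth dependence of $W_B$ on $(B,\tau)$, the fact that $\Pi'<0$, and the implicit function theorem give $B\in C^2(0,\infty)$ (using $q,s\in C^2$), hence $v\in C^2\big((0,\infty)\times\R\big)$ and $U\in C^2\big((\R^3\setminus\{0\})\times\R\big)$ with $\nabla\times\nabla\times U=0$ there, so $U$ solves \eqref{semilinear}$_+$ away from the origin. Near $r=0$ one expands $\Pi$ at $B=0$, finding $2\pi-\Pi(B)$ comparable to $B^{p-1}$ (indeed $\Pi(B)=2\pi-cB^{p-1}+o(B^{p-1})$, $c>0$), so that $B(r)$ is comparable to $\big(2\pi-T\sqrt{q(r)/s(r)}\big)^{1/(p-1)}$, with the same comparison for the first and second $r$-derivatives; since $\big(q/V\big)^{1/(p-1)}$ is a positive $C^2$ function on $\R^3$, (H2) then yields that the radial field $x\mapsto v(|x|,t)$ tends to $0$ in the $C^2$-sense as $x\to0$, uniformly for $t\in[0,T]$ (for small $B$ one has $W_B(\tau)=B\cos\tau+O(B^p)$, so $W_B$ and its first two derivatives are $O(B)$ uniformly in $\tau$). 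A short computation then shows that the gradient of $x\mapsto\int_0^{|x|}v(\rho,t)\,d\rho$ extends to a $C^2$ field across $x=0$ with $U(0,t)=0$, making $U$ a classical solution on all of $\R^3\times\R$, $T$-periodic by construction. Finally $|U(x,t)|=|v(|x|,t)|\le C\big(q(r)/V(r)\big)^{1/(p-1)}B(r)\le C'\big(2\pi-T\sqrt{q(r)/s(r)}\big)^{1/(p-1)}\le C''e^{-\delta|x|}$, where (H4) bounds $q/V$ and (H3) bounds $2\pi-T\sqrt{q/s}$ by a constant times $e^{-\delta(p-1)|x|}$.

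For the phase-shifted family, given a radially symmetric $a\in C^2$, set $U_a(x,t):=U(x,t+a(x))=v(|x|,t+a(|x|))\,\frac{x}{|x|}$. For each fixed $t$ this is the gradient of the radially symmetric $C^2$ function $x\mapsto\int_0^{|x|}v(\rho,t+a(\rho))\,d\rho$, hence $\nabla\times\nabla\times U_a=0$; since $\partial_t^2U_a(x,t)=(\partial_t^2U)(x,t+a(x))$ and the reduced ODE holds at every $(r,t')$, $U_a$ again solves \eqref{semilinear}$_+$, and it inherits $C^2$-regularity across $x=0$ (composition with the $C^2$ map $t\mapsto t+a(|x|)$, the $C^2$-smallness of $v$ near the origin being unaffected), $T$-periodicity, and the exponential bound from $U$. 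The step I expect to be the main obstacle is exactly the regularity at the spatial origin: converting (H2) into genuine $C^2$-smoothness of $U$ at $x=0$ requires the precise asymptotics of $\Pi^{-1}$ near $2\pi$ together with care about the non-smoothness of $x\mapsto|x|$, which is why (H2) is formulated as convergence in the $C^2$-sense of that particular power of $2\pi-T\sqrt{q/s}$.
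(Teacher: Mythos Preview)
Your proposal is correct and follows essentially the same route as the paper: the radial gradient-field ansatz reducing \eqref{semilinear}$_+$ to the parametrized ODE, the rescaling to the universal equation $\ddot y+y+|y|^{p-1}y=0$, the monotone period map and its inversion to enforce $T$-periodicity, the use of (H3)--(H4) for exponential decay, and Lemma~\ref{phase_shift} for the phase-shifted continuum. The only substantive differences are cosmetic (you parametrize orbits by amplitude $B$ rather than by the energy-related $c$ with $A_+(\gamma(c))=c^2$), and you correctly identify that the delicate step is the $C^2$-extension at $x=0$, which the paper handles via the precise expansions of $\sqrt{M}=\sqrt{L^{-1}}$ and its first two derivatives near $2\pi$ (Lemma~\ref{phase_plane_plus}(iv) and the Appendix) to show $c(0)=c'(0)=c''(0)=0$ and hence $\psi(0,t)=\psi''(0,t)=0$; your expansion $\Pi(B)=2\pi-cB^{p-1}+o(B^{p-1})$ would need to be sharpened to a $(1+O(B^{p-1}))$-type remainder together with the corresponding expansions for $(\Pi^{-1})'$ and $(\Pi^{-1})''$ to complete that step.
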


\begin{theorem} Suppose $s,q,V:\R^3\to (0,\infty)$ are radially symmetric $C^2$-functions and let $T=2\pi \sqrt{\frac{s(0)}{q(0)}}$. Assume
\begin{itemize}
\item[(H1)'] $T \sqrt{\frac{q(x)}{s(x)}} >  2\pi$ for all $x\in \R^3\setminus\{0\}$
\end{itemize}
and that (H2)--(H4) hold. Then there exists a $T$-periodic $\R^3$-valued breather solution $U$ of \eqref{semilinear}$_-$ with the property $\sup_{\R^3\times\R} |U(x,t)|e^{\delta|x|}<\infty$. It generates a continuum of phase-shifted breathers $U_a$ as described in Theorem~\ref{ex_plus}.
\label{ex_minus}
\end{theorem}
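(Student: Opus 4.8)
The plan is to mirror the proof of Theorem~\ref{ex_plus}, exploiting that the curl-curl operator annihilates gradient fields of radially symmetric functions. Concretely, I would look for a solution of the form $U(x,t) = \nabla_x \phi(|x|,t) = \phi_r(r,t)\,\frac{x}{r}$ with $\phi$ radially symmetric in space. Since $\nabla\times\nabla\times U = 0$ for such $U$, equation \eqref{semilinear}$_-$ reduces, after writing $w(r,t) = \phi_r(r,t)$ and using that $|U| = |w|$, to the scalar ODE in $t$ (with $r>0$ as a parameter)
\begin{equation*}
s(r)\, w_{tt} + q(r)\, w - V(r)\, |w|^{p-1} w = 0,
\end{equation*}
together with the requirement that $w$ be $T$-periodic in $t$, that $w(r,t)/r$ extend to a $C^2$ vector field near $r=0$, and that $w$ decay exponentially as $r\to\infty$. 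The sign change $-V$ (as opposed to $+V$) is exactly what forces hypothesis (H1)$'$ instead of (H1): in the phase plane of the autonomous ODE $s(r)w_{tt} = -q(r)w + V(r)|w|^{p-1}w$, small-amplitude oscillations about $w=0$ have (angular) frequency $\sqrt{q(r)/s(r)}$, and the nonlinear term now \emph{softens} the potential, so larger-amplitude periodic orbits have \emph{smaller} frequency; matching the period to $T$ therefore requires $T\sqrt{q(r)/s(r)} > 2\pi$ away from the origin, while at $r=0$ one has $T\sqrt{q(0)/s(0)} = 2\pi$ by the definition of $T$, so the needed amplitude tends to $0$ as $r\to 0$.

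The key steps, in order, are: (i) For each fixed $r>0$, analyze the planar Hamiltonian system with energy $E(w,w_t) = \tfrac{s(r)}{2}w_t^2 + \tfrac{q(r)}{2}w^2 - \tfrac{V(r)}{p+1}|w|^{p+1}$; identify the family of closed orbits surrounding the origin and the monotone relationship between amplitude (or energy) and minimal period $\mathcal{T}(r,\cdot)$. Show that as the amplitude ranges over an appropriate interval the period decreases continuously from $2\pi\sqrt{s(r)/q(r)}$ down toward some limiting value, so that under (H1)$'$ there is a unique amplitude $A(r)$ with minimal period exactly $T$ (or $T/k$ for the relevant winding number — presumably $k=1$). (ii) Use the quantitative bound $|2\pi - T\sqrt{q(r)/s(r)}|$ together with (H4) $\sup q/V < \infty$ to control $A(r)$ from above and below, in particular to get $A(r) \lesssim |2\pi - T\sqrt{q(r)/s(r)}|^{1/(p-1)}$ — this is the scaling that makes (H2) and (H3) do their job. (iii) Fix the phase: choose, say, $w(r,0) = 0$, $w_t(r,0) > 0$ on the selected orbit, giving a well-defined $w(r,t)$; verify that $r\mapsto w(r,\cdot)$ inherits $C^2$-regularity from the $C^2$-dependence of $s,q,V$ on $r$ and from the implicit-function-theorem characterization of $A(r)$. (iv) Check the behavior at $r=0$: by (H2) the amplitude and its first two $r$-derivatives vanish as $r\to 0$, so $w(r,t) = O(r^? )$ in $C^2$ and $U(x,t) = w(|x|,t)\tfrac{x}{|x|}$ extends to a $C^2$ vector field on $\R^3$ vanishing at the origin. (v) Check decay at $r=\infty$: by (H3), $A(r) e^{\delta(p-1)|x|/(p-1)} = A(r)e^{\delta|x|}$ stays bounded, giving $\sup_{\R^3\times\R}|U(x,t)|e^{\delta|x|}<\infty$. (vi) Finally, observe that replacing $t$ by $t + a(|x|)$ with $a$ radially symmetric and $C^2$ preserves all of the above (the ODE is autonomous in $t$, $U_a$ is still a gradient field in $x$ of a radially symmetric function — here one uses that $a$ depends only on $r$ so that $\nabla_x$ of the shifted potential is still radial — and exponential localization and $C^2$-regularity are untouched), yielding the continuum of phase-shifted breathers $U_a$.

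The main obstacle I expect is step (i)–(ii): proving the strict monotonicity of the period map for the \emph{softening} nonlinearity and, more importantly, getting the sharp two-sided asymptotics $A(r) \asymp |2\pi - T\sqrt{q(r)/s(r)}|^{1/(p-1)}$ uniformly in $r$, including as $r\to 0$ where the orbit degenerates to a point and the problem is singular. For a softening potential one must also confirm that the period map actually attains every value in $(2\pi\sqrt{s(r)/q(r)},\infty)$ — or at least an interval reaching up to whatever $T$ requires — because with $-V|w|^{p-1}w$ the closed orbits exist only up to a separatrix (the level set through the nonzero equilibria $|w|^{p-1} = q(r)/V(r)$), beyond which solutions blow up; this is precisely where (H4) enters, guaranteeing the separatrix energy is bounded below so that the orbit of period $T$ stays strictly inside it and no blow-up occurs. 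A secondary technical point is the regularity/parity of $w$ near $r=0$: one has to match the precise power of $r$ coming from (H2) — note the exponent $\frac1{p-1}$ there is exactly $A(r)$'s scaling — against the two $r$-derivatives needed for $U\in C^2(\R^3)$, and verify the oddness in $x/|x|$ is compatible, which is the reason the hypotheses are phrased in the "$C^2$-sense as $x\to 0$" rather than just pointwise.
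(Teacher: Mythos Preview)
Your plan is essentially the paper's own approach: reduce to the scalar ODE in $t$ with $r$ as a parameter, use phase-plane analysis to select the unique periodic orbit of period $T$ for each $r>0$, and then verify regularity at $r=0$ and exponential decay at $r=\infty$ via (H2)--(H4). Two small corrections: in step~(i) you write that the period ``decreases'' from $2\pi\sqrt{s(r)/q(r)}$, but for the softening nonlinearity it \emph{increases} (as you correctly said a few lines earlier), ranging over $[2\pi\sqrt{s(r)/q(r)},\infty)$ as the amplitude goes from $0$ to the separatrix --- this is exactly why (H1)$'$ is the right hypothesis. Also, your explanation of (H4) is slightly off: the orbit of period $T$ always lies strictly inside the separatrix because the period diverges there, so no blow-up can occur regardless of (H4); the role of (H4) is purely in the decay estimate, since the selected amplitude scales like $(q(r)/V(r))^{1/(p-1)}\bigl(T\sqrt{q(r)/s(r)}-2\pi\bigr)^{1/(p-1)}$ and one needs the first factor uniformly bounded to turn (H3) into exponential decay of $|U|$.

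One streamlining device in the paper that you may find worth adopting: rather than analyzing the $r$-dependent ODE directly, rescale via $w(r,t)=\tau(r)\,y(\sigma(r)t)$ with $\sigma(r)=\sqrt{\tilde q(r)/\tilde s(r)}$ and $\tau(r)=(\tilde q(r)/\tilde V(r))^{1/(p-1)}$, which reduces everything to the single canonical equation $\ddot y + y - |y|^{p-1}y = 0$. Then the period map $L(c)$ and its inverse $M=L^{-1}$ need be analyzed only once (independently of $r$), the amplitude selection becomes simply $c(r)=\sqrt{M(T\sigma(r))}$, and the expansions of $\sqrt{M}$, $(\sqrt{M}\,)'$, $(\sqrt{M}\,)''$ near $2\pi$ (Lemma~\ref{phase_plane_minus}(iv)) deliver precisely the $C^2$-vanishing $c(0)=c'(0)=c''(0)=0$ needed for your step~(iv).
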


The search for breather-solutions has spurred a lot of research in the area of nonlinear wave equations. One of the milestones was the discovery of the real-valued breather-family 
$$
u(x,t) = 4 \arctan\left(\frac{m \sin(\omega t)}{\omega \cosh( mx)}\right), \quad m,\omega >0, \quad m^2+\omega^2=1
$$
for the scalar $1+1$-dimensional sine-Gordon equation
\begin{equation}
\partial_t^2 u - \partial_x^2 u + \sin u =0 \mbox{ in } \R\times \R,
\label{sine_gordon}
\end{equation}
cf. \cite{ab_kaup_newell_segur:73}. The nonlinearity $\sin u$ is very special since perturbations of $\sin u$ -- in general -- do not permit breather families, cf. \cite{denzler:93}, \cite{birnir_mckenna_weinstein:94}. The situation is different for scalar $1+1$-dimensional nonlinear wave equations with $x$-dependent coefficients like 
\begin{equation}
\usetagform{pm}
s(x) \partial_t^2 u -\partial_x^2 u + q(x) u \pm V(x) |u|^{p-1} u = 0 \mbox{ for } (x,t)\in \R\times\R.
\label{semi}
\end{equation}
Note that \eqref{semi}$_\pm$ is a special case of \eqref{semilinear}$_\pm$ for fields 
$$
U(x,t) = \begin{pmatrix} 0 \\ 0 \\ u(x_1,t) \end{pmatrix}
$$
since in this case $\divergenz U =0$ and hence
$$
\nabla\times\nabla\times U(x,t)= \begin{pmatrix} 0 \\ 0 \\ -\partial_{x_1}^2 u(x_1,t) \end{pmatrix}.
$$
For \eqref{semi}$_-$, the specific example of $p=3$ and $1-$periodic coefficient functions
\begin{align*}
s(x) &= 1+15\chi_{[6/13,7/14)}(x), \quad x\modd 1 \\
q(x) &= \left(\left(\frac{13\pi}{16}\right)^2 - \left(\frac{13\arccos((9+\sqrt{1881})/100))}{8}\right)^2-\epsilon^2\right)s(x),\\
V(x) &= 1
\end{align*}
given in \cite{blank_bruckner_lescarret_schneider:11} allowed for breather-solutions with minimal period $\frac{32}{13}$ for all $\epsilon\in (0,\epsilon_0]$. This remarkable result relies on tailoring the spectrum of $-y'' =\lambda s(x)y$ and the use of spatial dynamics, center-manifold reduction and bifurcation theory. In a subsequent paper \cite{bruckner_wayne:15} methods of inverse spectral theory were developed that may allow in the future to generalize the above specific example to a bigger class of coefficient functions. 

\medskip

The breather construction of \cite{blank_bruckner_lescarret_schneider:11} strongly exploits the structure of spatially varying coefficients in \eqref{semi}$_-$. Also in our present paper we make heavy use of the particular spatial dependence of the coefficients $s(x), q(x), V(x)$ in \eqref{semilinear}$_\pm$. What is even more important is the particular property of the curl-operator to annihilate gradient fields. This enables us to construct gradient field breathers by ODE-techniques.

\medskip

Let us point out that in our two main theorems we prove the existence of $\R^3$-valued breathers of \eqref{semilinear}. Sometimes, monochromatic complex-valued waves of the type $u(x,t)=e^{i\omega t}\pmb{u}(x)$ are also called breathers provided $\pmb{u}$ decays to $0$ at $\pm\infty$. For such waves the nonlinear hyperbolic problem \eqref{semi}$_\pm$ reduces to a nonlinear ODE problem for $\pmb{u}$:
$$
-\pmb{u}'' +(q(x)-\omega^2 s(x))\pmb{u} \pm V(x) |\pmb{u}|^{p-1} \pmb{u}=0 \mbox{ on } \R
$$
Therefore, as one might expect, many results on the existence of exponentially decaying non-trivial solutions are known, e.g. in the case of periodic potentials \cite{alama_li:1992}, \cite{pankov:2005}. Also for the vector-valued wave equation \eqref{semilinear} one can prove the existence of $\C^3$-valued breathers of the type $e^{\frac{2\pi}{T}it}U(x)$ under various assumptions on the coefficients, cf. \cite{ABDF06}, \cite{bartsch_mederski_2}, \cite{bartsch_dohnal_plum_reichel:14}, \cite{benci_fortunato_archive}, \cite{daprile_siciliano}, \cite{hirsch_reichel}, \cite{Mederski_2014}. However, as the next result shows, it is remarkable that exactly the same assumptions as in Theorem~\ref{ex_plus}, Theorem~\ref{ex_minus} also lead to the existence of $\C^3$-valued monochromatic radially-symmetric breathers. This shows that (H1)--(H4) and (H1)', (H2)--(H4) are in some sense natural assumptions. We are, however, aware of the fact that neither the hypotheses of Theorem~\ref{ex_plus}, Theorem~\ref{ex_minus} nor the hypotheses of Theorem~\ref{ex_complex} are necessarily necessary for the existence of breathers.


\begin{theorem} Suppose that $s,q,V:\R^3\to (0,\infty)$ are radially symmetric $C^2$-functions such that (H1)--(H4) or (H1)', (H2)--(H4) hold respectively, and $T=2\pi \sqrt{\frac{s(0)}{q(0)}}$. Then there exists a continuum of $T$-periodic $\C^3$-valued monochromatic breather solutions $U(x,t)=e^{\frac{2\pi}{T} it} \pmb{U}(x)$ of \eqref{semilinear}$_\pm$, respectively, with the property $\sup_{\R^3} |\pmb{U}(x)|e^{\delta|x|}<\infty$.
\label{ex_complex}
\end{theorem}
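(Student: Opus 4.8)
The plan is to use, exactly as in Theorems~\ref{ex_plus} and~\ref{ex_minus}, that $\nabla\times\nabla g\equiv0$ for every $g\in C^2$, so that gradient fields lie in the kernel of $\nabla\times\nabla\times$. Put $\omega:=\tfrac{2\pi}{T}=\sqrt{q(0)/s(0)}$ and make the monochromatic ansatz
$$
U(x,t)=e^{\ri\omega t}\,\nabla_x\psi(x),\qquad \psi(x)=\Phi(|x|),
$$
with a (possibly complex-valued) radial profile $\Phi$ to be determined. Since $\psi\in C^2$ forces $\nabla\times\nabla\times U\equiv0$, and since $|U(x,t)|=|\nabla\psi(x)|$ is independent of $t$, the factor $e^{\ri\omega t}$ cancels out of \eqref{semilinear}$_\pm$; the wave equation therefore collapses to the purely \emph{algebraic} pointwise identity
$$
\bigl(q(x)-\omega^2 s(x)\bigr)\nabla\psi(x)\ \pm\ V(x)\,\bigl|\nabla\psi(x)\bigr|^{p-1}\nabla\psi(x)=0,\qquad x\in\R^3 .
$$
Writing $\nabla\psi(x)=\Phi'(|x|)\tfrac{x}{|x|}$, this is the scalar condition $|\Phi'(r)|^{p-1}=\pm(\omega^2 s-q)/V$ (evaluated at $|x|=r$), where the sign is the one in \eqref{semilinear}$_\pm$.

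To solve it, note that (H1), resp.\ (H1)$'$, is exactly the statement that $\pm(\omega^2 s(x)-q(x))>0$ for $x\ne0$ and $=0$ at $x=0$. Hence I may set
$$
\Phi'(r)=v(r):=\Bigl(\tfrac{\pm(\omega^2 s-q)}{V}\Bigr)^{1/(p-1)}\ge0,\qquad \Phi(r):=\int_0^r v(\rho)\,d\rho ,
$$
and a one-line substitution confirms the pointwise identity. The elementary factorization
$$
\pm\bigl(\omega^2 s(x)-q(x)\bigr)=\frac{\omega\bigl(\omega+\sqrt{q(x)/s(x)}\,\bigr)\,s(x)}{2\pi}\;\bigl|2\pi-T\sqrt{q(x)/s(x)}\bigr|
$$
shows $v(x)=h(x)^{1/(p-1)}\bigl|2\pi-T\sqrt{q(x)/s(x)}\bigr|^{1/(p-1)}$ with $h:=\tfrac{\omega(\omega+\sqrt{q/s})s}{2\pi V}>0$ of class $C^2$.

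Now the remaining hypotheses supply regularity and decay. By (H2) the last factor above, hence $v$, tends to $0$ in the $C^2$-sense as $x\to0$; together with $v(0)=0$ this makes $\Phi\in C^2([0,\infty))$ with $\Phi'(0)=0$, so $\psi(x)=\Phi(|x|)$ is a genuine $C^2$ (in fact $C^3$) function on all of $\R^3$, and $U$ is a classical solution of \eqref{semilinear}$_\pm$. For the decay: (H3) forces $q/s\to\omega^2$ exponentially as $|x|\to\infty$, so $q/s$ stays between two positive constants, and combined with (H4) this bounds the prefactor $h$; hence $|U(x,t)|=v(|x|)\le C\,e^{-\delta|x|}$ with the $\delta$ of (H3), i.e.\ $\sup_{\R^3}|\pmb U(x)|e^{\delta|x|}<\infty$.

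For the continuum I would replace the radial direction field $x/|x|$ by $e^{\ri\omega a(|x|)}x/|x|$ with $a:\R^3\to\R$ an arbitrary radially symmetric $C^2$ function, i.e.\ take $\psi_a(x):=\Phi_a(|x|)$, $\Phi_a(r):=\int_0^r e^{\ri\omega a(\rho)}v(\rho)\,d\rho$. The real and imaginary parts of $\psi_a$ are separately $C^2$ radial functions, so $\nabla\times\nabla\times(e^{\ri\omega t}\nabla\psi_a)\equiv0$ still holds; and $|\nabla\psi_a|=v$ is unchanged, so the pointwise identity, the regularity at the origin and the exponential decay all carry over. This produces the claimed continuum of $\C^3$-valued monochromatic breathers $U(x,t)=e^{\ri\omega t}\pmb U_a(x)$, $\pmb U_a=\nabla\psi_a$. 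As in Theorems~\ref{ex_plus} and~\ref{ex_minus}, the one genuinely delicate point is the behaviour at $r=0$: one must check that the explicit profile $v$ extends across the origin regularly enough for $\psi$ to be $C^2$ on $\R^3$, and that is precisely where (H2) is used; everything else is bookkeeping.
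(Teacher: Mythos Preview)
Your proposal is correct and takes essentially the same approach as the paper: the same radial gradient-field ansatz $U(x,t)=e^{i\omega t}\phi(|x|)\tfrac{x}{|x|}$ reduces \eqref{semilinear}$_\pm$ to an algebraic identity, solved by the same explicit profile $\phi=v=\bigl[\pm(\omega^2 s-q)/V\bigr]^{1/(p-1)}$, with (H1)/(H1)$'$ giving well-definedness, (H3)--(H4) the exponential decay, and (H2) the regularity $\phi(0)=\phi''(0)=0$ needed for $U\in C^2(\R^3\times\R)$ via Lemma~\ref{observation}. Your factorization making the role of (H2) transparent and your explicit continuum via the phase factor $e^{i\omega a(|x|)}$ (which is exactly Lemma~\ref{phase_shift} specialized to the monochromatic case) simply spell out details the paper's very terse proof leaves to the reader.
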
 

Let us finally mention that the interest in breathers in the context of curl-curl nonlinear wave equations stems from the search for optical breathers, i.e., time-periodic spatially localized solutions of Maxwell's equations in anisotropic materials where the permittivity depends nonlinearly on the electromagnetic fields, cf. \cite{Adamashvili_Kaup}. The nonlinear Maxwell problem for the electric field amounts to a quasilinear-in-time curl-curl wave equation, which is much harder to treat than the semilinear problem \eqref{semilinear}. In \cite{pelinovsky_simpson_weinstein} a one-dimensional reduction of the nonlinear Maxwell problem was considered. Based on an approximation by the so-called extended nonlinear coupled mode system (xNLCME) the authors achieved results that indicate the formation and persistence of spatially localized time-periodic polychromatic solutions.

\medskip

The paper is organized as follows. In Chapter~\ref{results} we prove our three main theorems. In order to keep the proofs simple and short we decided to transfer to the Appendix two elementary but slightly lengthy expansions on the inverses of functions given by explicit integral formulas.

\section{Proof of the results} \label{results}

In the above theorems radially symmetric functions from $\R^3\to \R$ occur. For such functions we use the following notation: if $f: \R^3\to \R$ is a radially symmetric $C^2$-function then we denote by $\tilde f: [0,\infty)\to \R$ with $\tilde f(|x|)=f(x)$ its one-dimensional representative, which has the properties $\tilde f\in C^2([0,\infty)$, $\tilde f'(0)=0$. The proofs of the main results require some preparations. We begin with an observation.

\begin{lemma}
Let $\phi: [0, \infty)\to \R$ be a $C^2$-function and let $W: \R^3\setminus\{0\} \to \R^3$ be given by $W(x) := \phi(|x|)\frac{x}{|x|}$. Then $W$ can be extended to a function
\begin{itemize}
\item[(i)] $W \in C^1(\R^3)$ if and only if $\phi(0)=0$.
\item[(ii)] $W \in C^2(\R^3)$ if and only if $\phi(0)=\phi''(0)=0$.
\end{itemize}
\label{observation}
\end{lemma}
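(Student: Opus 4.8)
The plan is to write $W$ in Cartesian coordinates and analyze smoothness componentwise. Set $\psi(r) := \phi(r)/r$ for $r>0$, so that $W_j(x) = \psi(|x|)\, x_j$. Then $W$ is as smooth as $x\mapsto \psi(|x|)$ times a polynomial, and the crux is: for which $k$ does $x\mapsto \psi(|x|)$ extend to a $C^{k}$-function on a neighborhood of the origin? Since $|x|$ is not differentiable at $0$, one must use that $\psi$ is a function of $r=|x|$ and exploit Taylor expansion of $\phi$ at $0$, keeping careful track of which terms produce odd powers of $|x|$ (the non-smooth ones) and which produce even powers $|x|^{2m} = (x_1^2+x_2^2+x_3^2)^m$ (which are polynomials, hence smooth).

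First I would prove (i). If $\phi(0)=0$, write $\phi(r) = r\,\phi'(0) + r\int_0^1 (\phi'(tr)-\phi'(0))\,dt =: r\,g(r)$ where $g(r) = \int_0^1 \phi'(tr)\,dt$ is $C^1$ on $[0,\infty)$ with $g(0)=\phi'(0)$. Hence $W_j(x) = g(|x|)\,x_j$ away from $0$. Now $g(|x|)$ is continuous at $0$, and I would compute $\partial_i W_j = g(|x|)\delta_{ij} + g'(|x|)\frac{x_i x_j}{|x|}$ for $x\neq 0$; using $g'(0)=0$ (which follows from $g'(r)=\int_0^1 t\phi''(tr)\,dt$, so $g'(0)=\tfrac12\phi''(0)$ — wait, this need not vanish, so instead I argue directly that $g'(|x|)\frac{x_ix_j}{|x|}$ is bounded and has a limit only if handled more carefully). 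The cleaner route: show $W_j$ is differentiable at $0$ with $\nabla W_j(0) = g(0) e_j = \phi'(0) e_j$ by the difference-quotient definition, and show the partial derivatives computed above extend continuously to $0$ — here one uses $g(|x|)\to g(0)$ and that $g'(|x|)\frac{x_ix_j}{|x|} = \frac{x_ix_j}{|x|^2}\big(|x|g'(|x|)\big)$, and $rg'(r) = \phi'(r)-g(r) \to \phi'(0)-\phi'(0) = 0$ as $r\to 0$, so the whole expression tends to $0$. Continuity of $\partial_i W_j$ at $0$ follows. Conversely, if $W\in C^1(\R^3)$ then $W$ is continuous at $0$, and since $W(x) = \phi(|x|)\frac{x}{|x|}$ has $|W(x)| = |\phi(|x|)|$, continuity forces $\phi(0)=0$.

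For (ii), assuming additionally $\phi''(0)=0$, I would Taylor-expand one order further: $\phi(r) = r\phi'(0) + r^3 h(r)$ with $h$ a $C^0$ function near $0$ (in fact $h(r) = \tfrac{1}{2}\int_0^1 (1-t)^2 \phi'''$-type remainder if $\phi\in C^3$; since we only have $C^2$ I would instead write $\phi(r) = r\phi'(0) + r^2\rho(r)$ with $\rho(r) = \int_0^1(1-t)\phi''(tr)\,dt$ continuous, $\rho(0) = \tfrac12\phi''(0) = 0$). Then $W_j(x) = \phi'(0)\,x_j + |x|\,\rho(|x|)\,x_j$. The first term is a polynomial, hence $C^\infty$. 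For the second term $|x|\rho(|x|) x_j$: since $\rho(0)=0$, one more factoring gives $\rho(r) = r\,\sigma(r)$ would require $\rho\in C^1$, which it is ($\rho'(r) = \int_0^1 (1-t)t\,\phi''' $...), again only $C^2$ is assumed — so I would instead argue directly that $x\mapsto |x|\rho(|x|)x_j$ is $C^1$ with vanishing first derivatives at $0$ (by the same bounded-difference-quotient estimates as in (i), now with an extra power of $|x|$ giving the needed decay) and that its second partials extend continuously to $0$. The computation of second derivatives of $|x|\rho(|x|)x_j$ produces terms with at worst $\frac{1}{|x|}$ singular prefactors multiplied by $|x|\rho(|x|)\to 0$ and $|x|^2\rho'(|x|)$-type quantities, all controllable since $\rho$ is continuous with $\rho(0)=0$; I would bound $|\rho(r)| = o(1)$ and $|r\rho'(r)| = |r\phi'(r)/r - $ ... hmm, let me instead use $r^2\rho(r) = \phi(r) - r\phi'(0)$, so $r^2\rho(r)$ is $C^2$ with value, first and second derivative all zero at $0$ (using $\phi(0)=\phi'(0)\cdot 0=0$, $(\phi(r)-r\phi'(0))' = \phi'(r)-\phi'(0) \to 0$, $(\phi(r)-r\phi'(0))'' = \phi''(r) \to \phi''(0) = 0$). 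Writing $F(r) := \phi(r) - r\phi'(0) = r^2\rho(r)$, we have $W_j(x) = \phi'(0) x_j + F(|x|)\frac{x_j}{|x|}$, and since $F\in C^2$, $F(0)=F'(0)=F''(0)=0$, this is exactly the form covered by applying the argument behind (i) twice — or more efficiently, one shows by the difference-quotient method that $x\mapsto F(|x|)\frac{x_j}{|x|}$ is $C^2$ near $0$. Conversely, if $W\in C^2$, then $\divergenz W = \frac{1}{|x|^2}(|x|^2\phi(|x|))' = \phi'(|x|) + \frac{2\phi(|x|)}{|x|}$ must be $C^1$, hence in particular continuous at $0$; combined with $\phi(0)=0$ from part (i) and the continuity of $r\mapsto \phi'(r)+2\phi(r)/r$ at $0$ (whose value is $\phi'(0) + 2\phi'(0) = 3\phi'(0)$, giving no new info), I would instead extract $\phi''(0)=0$ by evaluating a suitable second-order radial derivative of a component of $W$ — e.g. $\partial_1^2 W_1$ restricted to the axis $x = (r,0,0)$ equals $\phi''(r) + (\text{lower order, continuous})$, and forcing existence of the limit as $r\to 0^+$ together with the corresponding limit from $r\to 0^-$ (where $W_1 = -\phi(-r)$... careful, $W$ is defined on $\R^3\setminus\{0\}$ so I parametrize by the signed coordinate and use evenness/oddness) pins down $\phi''(0)=0$.

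\emph{Main obstacle.} The genuine difficulty is the "only if" direction of (ii): extracting the condition $\phi''(0)=0$ from $C^2$-regularity of $W$. The "if" directions and part (i) are routine Taylor-with-remainder bookkeeping, and the only mild care needed there is that $\phi$ is merely $C^2$, so one cannot freely differentiate remainder integrals a third time — this is circumvented by packaging the relevant radial function as $F(r) = \phi(r) - r\phi'(0)$ and using that $F, F', F''$ all vanish at $0$. For the necessity of $\phi''(0)=0$, the clean approach is to compute $\divergenz W$ and $\Delta$(something) or a well-chosen second partial of $W$ along a coordinate axis, express it as $\phi''(r)$ plus terms that are automatically continuous at $0$ once $\phi(0)=0$ is known, and conclude that $\phi''$ has a two-sided limit at $0$ equal to its value $\phi''(0)$ only if a certain odd-in-$r$ contribution cancels, which forces $\phi''(0)=0$; alternatively, exhibit directly that $W(x) = \frac{1}{2}\phi''(0)|x|\,x + o(|x|^2)$ has a component whose second difference quotient at the origin diverges unless $\phi''(0)=0$.
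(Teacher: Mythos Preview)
Your proposal is correct in outline, and for part (i) it is essentially the paper's argument in different clothing: your formula $\partial_i W_j = g(|x|)\delta_{ij} + g'(|x|)\frac{x_ix_j}{|x|}$ with $g(r)=\phi(r)/r$ is exactly the paper's first-derivative formula, and your observation $rg'(r)=\phi'(r)-g(r)\to 0$ is the same limit the paper uses.

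For part (ii), however, the paper takes a much shorter and more symmetric route that simultaneously dispatches both the ``if'' and the ``only if'' directions, including what you singled out as the main obstacle. Rather than subtracting off the linear part and introducing $F(r)=\phi(r)-r\phi'(0)$, the paper simply differentiates the first-derivative formula once more to obtain, for $x\neq 0$,
\[
\frac{\partial^2 W_i}{\partial x_j\partial x_k}(x)=\Bigl(\phi''(r)-3\,\tfrac{\phi'(r)r-\phi(r)}{r^2}\Bigr)\frac{x_ix_jx_k}{r^3}
+\Bigl(\tfrac{\phi'(r)r-\phi(r)}{r^2}\Bigr)\frac{\delta_{ik}x_j+\delta_{jk}x_i+\delta_{ij}x_k}{r},
\]
and then notes (assuming $\phi(0)=0$, so that a second-order Taylor expansion applies) that the two radial coefficients tend to $-\tfrac12\phi''(0)$ and $\tfrac12\phi''(0)$ respectively as $r\to 0$. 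Since the angular factors $\tfrac{x_ix_jx_k}{r^3}$ and $\tfrac{\delta_{ik}x_j+\delta_{jk}x_i+\delta_{ij}x_k}{r}$ have no limit at the origin, the second partials extend continuously if and only if $\phi''(0)=0$. This is precisely what your axis-evaluation idea would detect (take $i=j=k=1$ on the $x_1$-axis and compare the $x_1\to 0^\pm$ limits), but the explicit full formula makes the ``only if'' immediate and avoids all the integral-remainder repackaging you were wrestling with. Your approach would eventually work, but the paper's direct computation is both shorter and cleaner.
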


\begin{proof} The function $W$ continuously extends to $\R^3$ if and only if $\phi(0)=0$. 

\medskip

(i) The first derivatives of $W$ on $\R^3\setminus\{0\}$ are given by
$$
\frac{\partial W_i}{\partial x_j}(x) = \left(\phi'(r)-\frac{\phi(r)}{r}\right)\frac{x_i x_j}{r^2} + \frac{\phi(r)}{r}\delta_{ij}
$$
where $r=|x|$. For the limit as $x\to 0$ to exist one again needs $\phi(0)=0$. In this case we know that $\phi(r)/r\to \phi'(0)$ as $r\to 0$ and $\phi'(r)-\frac{\phi(r)}{r}\to 0$ as $r\to 0$. This shows that the first partial derivatives of $W$ on $\R^3\setminus\{0\}$ continuously extend to $\R^3$ if and only if $\phi(0)=0$.

\medskip

(ii) The second derivatives of $W$ on $\R^3\setminus\{0\}$ are
$$
\frac{\partial^2 W_i}{\partial x_j \partial x_k}(x) = \underbrace{\left(\phi''(r) - 3 \frac{\phi'(r)r-\phi(r)}{r^2}\right)}_{\to -\frac{1}{2}\phi''(0) \mbox{ as } r\to 0}\frac{x_i x_j x_k}{r^3} + \underbrace{\left(\frac{\phi'(r)r-\phi(r)}{r^2}\right)}_{\to \frac{1}{2}\phi''(0) \mbox{ as } r\to 0}\frac{\delta_{ik}x_j + \delta_{jk}x_i + \delta_{ij}x_k}{r}
$$
Therefore the second partial derivatives of $W$ on $\R^3\setminus\{0\}$ continuously extend to $\R^3$ if and only if $\phi(0)=0$ and $\phi''(0)=0$.
\end{proof}

In the following we consider functions $\psi=\psi(r,t)$ from $[0,\infty)\times \R$ to $\R$. We use the notation $\psi'(r,t)=\frac{\partial}{\partial r} \psi(r,t)$ and $\dot\psi(r,t)= \frac{\partial}{\partial t}\psi(r,t)$. Under the assumptions (H1)--(H4) of Theorem~\ref{ex_plus} or (H1)', (H2)--(H4) of Theorem~\ref{ex_minus} we look for solutions $U$ of \eqref{semilinear}$_\pm$ of the form $U(x,t) := \psi(|x|,t)\frac{x}{|x|}$. 

\begin{lemma}
Let $\psi: [0, \infty)\times \R\to \R$ be a $C^2$-function with $\psi(0,t)=\psi''(0,t)=0$. Then $U(x,t) := \psi(|x|,t)\frac{x}{|x|}$ is a $C^2(\R^3\times\R)$ function. It solves \eqref{semilinear}$_\pm$ if and only if $\psi$ satisfies 
\begin{equation}
\usetagform{pm}
\label{ode_psi}
\tilde s(r)\ddot \psi + \tilde q(r) \psi \pm \tilde V(r)|\psi|^{p-1}\psi = 0 \mbox{ for } r\geq 0, t\in \R.
\end{equation}
\label{ansatz_psi}
\end{lemma}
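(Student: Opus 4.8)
The plan is to treat the statement in three stages: the $C^2$-regularity of $U$, the annihilation of $U$ by the curl-curl operator, and the algebraic reduction of the remaining terms.

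\textbf{Regularity.} For each fixed $t$ the spatial map $x\mapsto U(x,t)$ is precisely of the form considered in \lemref{observation}, with $\phi(\cdot)=\psi(\cdot,t)$; since $\psi(0,t)=\psi''(0,t)=0$ by hypothesis, that lemma yields a $C^2$-extension of $U(\cdot,t)$ to $\R^3$ together with the explicit expressions for the first and second spatial derivatives recorded in its proof. Because those expressions are built from $\psi,\psi',\psi''$ and powers of $x/|x|$, and because the $t$-derivatives of $U$ are obtained simply by differentiating $\psi$ in $t$, a routine check — using the continuity of $\psi,\psi',\psi''$ and the fact that $\psi(r,t)/r$ and $(\psi'(r,t)r-\psi(r,t))/r^2$ extend continuously to $r=0$ (by Taylor expansion of $\psi$ in $r$) — shows that all partial derivatives of $U$ up to second order are jointly continuous on $\R^3\times\R$, i.e. $U\in C^2(\R^3\times\R)$.

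\textbf{Vanishing of the curl-curl term.} For each fixed $t$, $U(\cdot,t)$ is a gradient field: with $\Phi(x,t):=\int_0^{|x|}\psi(\rho,t)\,d\rho$ one has $\nabla_x\Phi(x,t)=\psi(|x|,t)\frac{x}{|x|}=U(x,t)$ on $\R^3\setminus\{0\}$; equivalently, writing $U=g(r)x$ with $g(r)=\psi(r,t)/r$, one computes directly $\nabla\times\bigl(g(r)x\bigr)=g'(r)\tfrac{x}{r}\times x=0$. Hence $\nabla\times U$ vanishes on $\R^3\setminus\{0\}$; since $U\in C^2(\R^3\times\R)$, the field $\nabla\times U$ is continuous on all of $\R^3\times\R$, so it vanishes identically, and therefore so does $\nabla\times\nabla\times U$.

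\textbf{Reduction.} With the curl-curl term eliminated, \eqref{semilinear}$_\pm$ reads $s(x)\partial_t^2U+q(x)U\pm V(x)|U|^{p-1}U=0$. Substituting $U=\psi(|x|,t)\frac{x}{|x|}$ and using $\bigl|\tfrac{x}{|x|}\bigr|=1$, so that $\partial_t^2U=\ddot\psi\,\frac{x}{|x|}$, $|U|=|\psi|$, and $|U|^{p-1}U=|\psi|^{p-1}\psi\,\frac{x}{|x|}$, the left-hand side becomes $\bigl(\tilde s(r)\ddot\psi+\tilde q(r)\psi\pm\tilde V(r)|\psi|^{p-1}\psi\bigr)\frac{x}{|x|}$ for $x\neq0$. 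Since $\frac{x}{|x|}\neq0$, the PDE holds on $\R^3\times\R$ if and only if the scalar bracket vanishes for all $r>0$ and $t\in\R$; and at $r=0$ equation \eqref{ode_psi}$_\pm$ holds automatically, because $\psi(0,\cdot)\equiv0$ forces $\ddot\psi(0,\cdot)\equiv0$. This gives the asserted equivalence. The only delicate point in the argument is the passage from $\R^3\setminus\{0\}$ to $\R^3$ in the curl computation, which is exactly where the regularity supplied by \lemref{observation} — hence the boundary conditions $\psi(0,t)=\psi''(0,t)=0$ — is indispensable.
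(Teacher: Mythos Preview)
Your proof is correct and follows essentially the same route as the paper: invoke \lemref{observation} for the $C^2$-regularity, observe that $U$ is a gradient field (the paper uses the same primitive $\Psi(r,t)=\int_0^r\psi(\rho,t)\,d\rho$), and conclude $\nabla\times U=0$. The paper's proof is extremely terse and leaves the algebraic reduction and the joint $(x,t)$-regularity implicit; you spell these out, including the observation (which the paper records as ``note that also $\dot\psi(0,t)=\ddot\psi(0,t)=0$'') that the time-derivatives inherit the vanishing at $r=0$ from $\psi(0,\cdot)\equiv0$.
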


\begin{proof} By Lemma~\ref{observation} the function $U$ is a $C^2$ function of the variables $x$ and $t$; note that also $\dot\psi(0,t)=\ddot\psi(0,t)=0$. Moreover, by construction $U$ is a gradient-field, i.e., $U(x,t) = \nabla_x \Psi(|x|,t)$ where $\Psi(r,t) = \int_0^r \psi(\rho,t)\,d\rho$. Hence $\nabla\times U=0$ and thus the claim follows.
\end{proof}

The next result is a direct consequence of the fact that \eqref{ode_psi}$_\pm$ is autonomous with respect to $t$ and that $r\geq 0$ plays the role of a parameter.

\begin{lemma} Suppose $U(x,t)=\psi(|x|,t)\frac{x}{|x|}$ solves \eqref{semilinear}$_\pm$. Let $a:\R^3\to \R$ be a radially symmetric $C^2$-function. Then 
$$
U_a(x,t) := U(x,t+a(x))
$$
also solves \eqref{semilinear}$_\pm$. Hence, from one radially symmetric breather one can generate a continuum of different phase-shifted breathers.
\label{phase_shift}
\end{lemma}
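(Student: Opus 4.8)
The plan is to verify directly that $U_a(x,t):=U(x,t+a(x))$ satisfies \eqref{semilinear}$_\pm$ by reducing it, as in \lemref{ansatz_psi}, to the radial ODE \eqref{ode_psi}$_\pm$ for a suitably shifted radial profile. Write $U(x,t)=\psi(|x|,t)\frac{x}{|x|}$ and set $\psi_a(r,t):=\psi(r,t+\tilde a(r))$, where $\tilde a$ is the one-dimensional representative of the radially symmetric $C^2$-function $a$. Since $a$ is radially symmetric and $C^2$ we have $\tilde a\in C^2([0,\infty))$ with $\tilde a'(0)=0$; combined with $\psi\in C^2$ and the vanishing conditions $\psi(0,t)=\psi''(0,t)=\dot\psi(0,t)=\ddot\psi(0,t)=0$, one checks that $\psi_a\in C^2([0,\infty)\times\R)$ and that $\psi_a(0,t)=\psi_a''(0,t)=0$ (the latter using $\tilde a'(0)=0$, so that the $r$-derivatives of the shift do not destroy the second-order vanishing at $r=0$). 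Therefore $U_a(x,t)=\psi_a(|x|,t)\frac{x}{|x|}$ falls under the hypotheses of \lemref{ansatz_psi}.

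Next I would observe that $U_a$ is again a gradient field in $x$: indeed $U_a(x,t)=\nabla_x\Psi_a(|x|,t)$ with $\Psi_a(r,t):=\int_0^r\psi_a(\rho,t)\,d\rho$, exactly as in the proof of \lemref{ansatz_psi}, so $\nabla\times U_a=0$ and hence $\nabla\times\nabla\times U_a=0$. Consequently, by \lemref{ansatz_psi}, $U_a$ solves \eqref{semilinear}$_\pm$ if and only if $\psi_a$ solves \eqref{ode_psi}$_\pm$. But \eqref{ode_psi}$_\pm$ is autonomous in $t$, with $r\ge 0$ entering only as a parameter through $\tilde s(r),\tilde q(r),\tilde V(r)$; for each fixed $r$ the map $t\mapsto\psi(r,t)$ solves the ODE, and so does its time-translate $t\mapsto\psi(r,t+\tilde a(r))$, since $\ddot\psi_a(r,t)=\ddot\psi(r,t+\tilde a(r))$ and $|\psi_a|^{p-1}\psi_a$ is evaluated at the same shifted time. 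Hence $\psi_a$ satisfies \eqref{ode_psi}$_\pm$, and therefore $U_a$ solves \eqref{semilinear}$_\pm$. Finally, $U_a$ is $T$-periodic in $t$ because $U$ is, and $\sup_{\R^3\times\R}|U_a(x,t)|e^{\delta|x|}=\sup_{\R^3\times\R}|U(x,t)|e^{\delta|x|}<\infty$ since the time-shift does not change the supremum over $t\in\R$; distinct choices of $a$ (differing by more than a constant) give genuinely different breathers, which yields the claimed continuum.

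The only mildly delicate point is the regularity bookkeeping at $r=0$: one must confirm that inserting the $r$-dependent shift $\tilde a(r)$ preserves membership in $C^2$ and the two vanishing conditions $\psi_a(0,t)=\psi_a''(0,t)=0$ required by \lemref{ansatz_psi} (equivalently, that $U_a$ extends to a $C^2$ field on $\R^3\times\R$ via \lemref{observation}). This is where the hypothesis $\tilde a'(0)=0$ — automatic for radially symmetric $C^2$ functions — is used, together with the fact that all relevant $t$- and mixed $rt$-derivatives of $\psi$ vanish at $r=0$. Everything else is the chain rule and the autonomy of \eqref{ode_psi}$_\pm$ in $t$, which is precisely the remark preceding the statement. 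No genuine obstacle arises; the lemma is a structural consequence of the gradient-field ansatz and the $t$-translation invariance of the reduced ODE.
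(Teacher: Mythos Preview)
Your argument is correct and follows exactly the approach the paper indicates: the paper does not give a separate proof of this lemma but states just before it that the result is a direct consequence of the autonomy of \eqref{ode_psi}$_\pm$ in $t$ with $r$ as a parameter. You have simply fleshed out this remark, including the regularity check at $r=0$ via \lemref{observation} and \lemref{ansatz_psi}, which the paper leaves implicit.
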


The proof of Theorem~\ref{ex_plus} relies on rescaling \eqref{ode_psi}$_+$ as follows: let us find solutions $\psi(r,t)$ of \eqref{ode_psi}$_+$ of the form
\begin{equation}
\psi(r,t) = \tau(r)y(\sigma(r) t).
\label{ansatz}
\end{equation}
Inserting this into \eqref{ode_psi} and comparing coefficients tells us that $y$ has to solve
\begin{equation}
\ddot y + y + |y|^{p-1}y = 0
\label{ode_y_plus}
\end{equation}
where
\begin{equation}
\sigma(r) = \left(\frac{\tilde q(r)}{\tilde s(r)}\right)^{1/2}, \quad \tau(r) = \left(\frac{\tilde q(r)}{\tilde V(r)}\right)^\frac{1}{p-1}.
\label{choice_sigma_tau}
\end{equation}
For the proof of Theorem~\ref{ex_minus} we use the same ansatz \eqref{ansatz} and obtain that $y$ has to solve
\begin{equation}
\ddot y + y - |y|^{p-1}y = 0
\label{ode_y_minus}
\end{equation}
where $\sigma(r)$ and $\tau(r)$ are chosen as in \eqref{choice_sigma_tau}. Next we collect some results about the solutions of \eqref{ode_y_plus} and \eqref{ode_y_minus}.

\begin{lemma}
Define the function $A_+: \R^2\to \R$ by $A_+(\xi,\eta):= \eta^2+\xi^2+ \frac{2}{p+1}|\xi|^{p+1}$. Then $A_+$ is a first integral for \eqref{ode_y_plus}, i.e., every solution $y$ of \eqref{ode_y_plus} satisfies $A_+(y,\dot y)=\const=c$ for some $c\in [0,\infty)$. 
Every orbit of \eqref{ode_y_plus} is uniquely characterized by the value $c\in [0,\infty)$ and every solution $y$ on such an orbit is periodic with minimal period $L(c)$ and maximal amplitude $N(c):= \max_{t\in \R}|y(t)|$. Then 
\begin{itemize}
\item[(i)] $L,N \in C\bigl([0,\infty)\bigr)\cap C^\infty\bigl((0,\infty)\bigr)$ and $L\bigl([0,\infty)\bigr)= (0,2\pi]$, $N\bigl([0,\infty)\bigr)=[0,\infty)$.
\item[(ii)]$N(c)$ is strictly increasing in $c$ with $N'>0$ on $(0,\infty)$, $N(c)\leq \sqrt{c}$ for all $c>0$ and $\lim_{c\to 0} \frac{N(c)}{\sqrt{c}}=1$, $\lim_{c\to \infty} N(c)=\infty$.
\item[(iii)] $L(c)$ is strictly decreasing in $c$ with $L'<0$ on $(0,\infty)$, $\lim_{c\to \infty} L(c)=0$ and $L(0) = 2\pi$.
\item[(iv)] $M= L^{-1}: (0,2\pi]\to [0,\infty)$ is in $C^\infty\bigl((0,2\pi)\bigr)$ and has the following expansions as $s \to 2\pi-$
\begin{align*}
\sqrt{M(s)} & = \sqrt{\alpha}(2\pi-s)^\frac{1}{p-1}(1+O(2\pi-s)), \\
{\sqrt{M(s)}\,}' & = -\frac{\sqrt{\alpha}}{p-1} (2\pi-s)^\frac{2-p}{p-1}(1+ O(2\pi-s)),\\
{\sqrt{M(s)}\,}'' &= \frac{\sqrt{\alpha}(2-p)}{(p-1)^2} (2\pi-s)^\frac{3-2p}{p-1}(1+ O(2\pi-s))
\end{align*}
for some constant $\alpha>0$. 
\end{itemize}
\label{phase_plane_plus}
\end{lemma}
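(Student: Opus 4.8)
The proof splits naturally along the four items, and the organizing idea is to combine the explicit first integral with a rescaling of the phase–plane variable. First I would check by differentiation that $A_+(y,\dot y)$ is constant along solutions of \eqref{ode_y_plus}. Since $A_+$ is positive definite, coercive and $C^1$ with gradient vanishing only at the origin, each level set $\{A_+=c\}$ with $c>0$ is a $C^1$ Jordan curve encircling $0$, it coincides with a single orbit, and the flow on such a compact fixed-point-free orbit is periodic; this gives the parametrization by $c\in[0,\infty)$. On the orbit $\{A_+=c\}$ the quantity $|y|$ is maximal exactly where $\dot y=0$, so $N(c)$ is characterized by $N^2+\tfrac{2}{p+1}N^{p+1}=c$. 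As the left-hand side is a strictly increasing $C^\infty$-bijection of $[0,\infty)$ onto itself, the implicit function theorem yields $N\in C([0,\infty))\cap C^\infty((0,\infty))$ with $N'(c)=\bigl(2N(1+N^{p-1})\bigr)^{-1}>0$, and $N^2\le c$ together with the identity $N^2/c=1-\tfrac{2}{p+1}N^{p-1}(N^2/c)$ gives the remaining assertions of (ii).

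For the period I would use separation of variables to write $L(c)=4\int_0^{N(c)}\bigl(c-y^2-\tfrac{2}{p+1}|y|^{p+1}\bigr)^{-1/2}\,dy$ and substitute $y=N(c)s$; invoking $c=N^2+\tfrac{2}{p+1}N^{p+1}$, this collapses to
\[
L(c)=\Lambda\bigl(N(c)^{p-1}\bigr),\qquad \Lambda(v):=4\int_0^1\frac{ds}{\sqrt{(1-s^2)+\tfrac{2}{p+1}v(1-s^{p+1})}}.
\]
The merit of this substitution is that $\Lambda$ is manifestly strictly decreasing on $[0,\infty)$ (differentiate under the integral: the integrand is strictly decreasing in $v$ on $(0,1)$), hence so is $L$ in $c$; moreover $\Lambda(0)=4\int_0^1(1-s^2)^{-1/2}\,ds=2\pi$ and $\Lambda(v)\le C\,v^{-1/2}\to0$, which yields (iii) and the ranges in (i). For regularity, every $v$-derivative of $\Lambda$ at $v=0$ involves $\int_0^1(1-s^2)^{-(2k+1)/2}(1-s^{p+1})^k\,ds$, which converges because $1-s^{p+1}$ vanishes to first order at $s=1$; thus $\Lambda\in C^\infty([0,\infty))$ and $L=\Lambda\circ N^{p-1}\in C([0,\infty))\cap C^\infty((0,\infty))$. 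Since $L'<0$, the inverse function theorem gives $M=L^{-1}\in C^\infty((0,2\pi))$ with $M(2\pi)=0$.

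For (iv) I would Taylor-expand $\Lambda(v)=2\pi-\beta v+O(v^2)$ with $\beta=\tfrac{4}{p+1}\int_0^1(1-s^{p+1})(1-s^2)^{-3/2}\,ds>0$, so that $2\pi-s=\Phi(v)$ for a $C^\infty$ function $\Phi$ with $\Phi(0)=0$, $\Phi'(0)=\beta\neq0$; inverting, $v=(2\pi-s)\,w(2\pi-s)$ with $w$ smooth near $0$ and $w(0)=1/\beta>0$. Substituting into $M(s)=c=N^2(1+\tfrac{2}{p+1}v)=v^{2/(p-1)}(1+\tfrac{2}{p+1}v)$ and using that $w^{2/(p-1)}$ is smooth near $0$ (as $w(0)>0$) gives $M(s)=(2\pi-s)^{2/(p-1)}G(2\pi-s)$ with $G$ smooth and $G(0)=\beta^{-2/(p-1)}=:\alpha>0$; hence $\sqrt{M(s)}=(2\pi-s)^{1/(p-1)}H(2\pi-s)$ with $H=\sqrt G$ smooth and $H(0)=\sqrt\alpha$. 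Writing $t=2\pi-s$ and differentiating $t^{1/(p-1)}H(t)$ once and twice in $s$, the lowest power of $t$ dominates each time (differentiating $H$ costs an extra factor $t$), which produces exactly the three stated asymptotics with exponents $\tfrac{1}{p-1}$, $\tfrac{2-p}{p-1}$, $\tfrac{3-2p}{p-1}$ and leading constants $\sqrt\alpha$, $-\tfrac{\sqrt\alpha}{p-1}$, $\tfrac{\sqrt\alpha(2-p)}{(p-1)^2}$.

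The main obstacle is part (iv): one must first secure the smoothness of $\Lambda$ up to $v=0$ — which hinges on the innocuous-looking but essential fact that $1-s^{p+1}$ vanishes to first order at $s=1$, thereby taming the otherwise non-integrable singularities of the differentiated period integral — and then perform the inversion $L\mapsto M$ together with careful bookkeeping of error terms, since the exponents $\tfrac{1}{p-1}$ etc.\ are non-integer and one must track which factors survive as $C^\infty$ after taking fractional powers. The rescaling $y=N(c)s$ is precisely what renders both the monotonicity of $L$ and these expansions tractable; without it the period integral is markedly harder to control.
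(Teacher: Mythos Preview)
Your argument is correct and follows essentially the same route as the paper: the same implicit equation for $N(c)$, the same rescaling $y=N(c)s$ in the period integral leading to $L(c)=F\bigl(\tfrac{2}{p+1}N(c)^{p-1}\bigr)$ (your $\Lambda(v)$ is $F\bigl(\tfrac{2v}{p+1}\bigr)$), the same observation that $\kappa(z)=\tfrac{1-z^{p+1}}{1-z^2}$ is continuous on $[0,1]$ to secure $C^\infty$-smoothness of the period function up to $v=0$, and the same decomposition $M=\Phi\circ F^{-1}$ with $\Phi(w)\sim w^{2/(p-1)}$. The only notable stylistic difference is in part~(iv): the paper computes the Taylor expansions of $M$, $M'$, $M''$ explicitly via the chain rule and then passes to $\sqrt{M}$, whereas you package the inversion more succinctly as $\sqrt{M(s)}=(2\pi-s)^{1/(p-1)}H(2\pi-s)$ with $H$ smooth near $0$ and differentiate that product directly---your version is a bit more streamlined but yields the identical constants (indeed $\beta^{-2/(p-1)}=\bigl(\tfrac{p+1}{2}\bigr)^{2/(p-1)}\tilde\alpha^{2/(p-1)}=\alpha$ in the paper's notation).
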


\begin{proof}
Let us first verify all statements for $N(c)$. The function $N(c)$ is given implicitly through
$$
N(c)^2+ \frac{2}{p+1} N(c)^{p+1} = c
$$
which provides the strict monotonicity, continuity and differentiability properties of $N(c)$ for $c>0$. It also implies the inequality $N(c)\leq \sqrt{c}$ and $\lim_{c\to 0} \frac{N(c)}{\sqrt{c}}=1$, $\lim_{c\to\infty} N(c)=\infty$. 

\medskip

Now we prove the statements for $L(c)$. We use the first integral 
$$
|\dot y|^2 + |y|^2 + \frac{2}{p+1}|y|^{p+1} = c
$$
to solve for $\dot y$ in all four quadrants of the phase-plane, cf. Figure~\ref{figure_plus}.
\begin{figure}
\scalebox{0.5}{\includegraphics{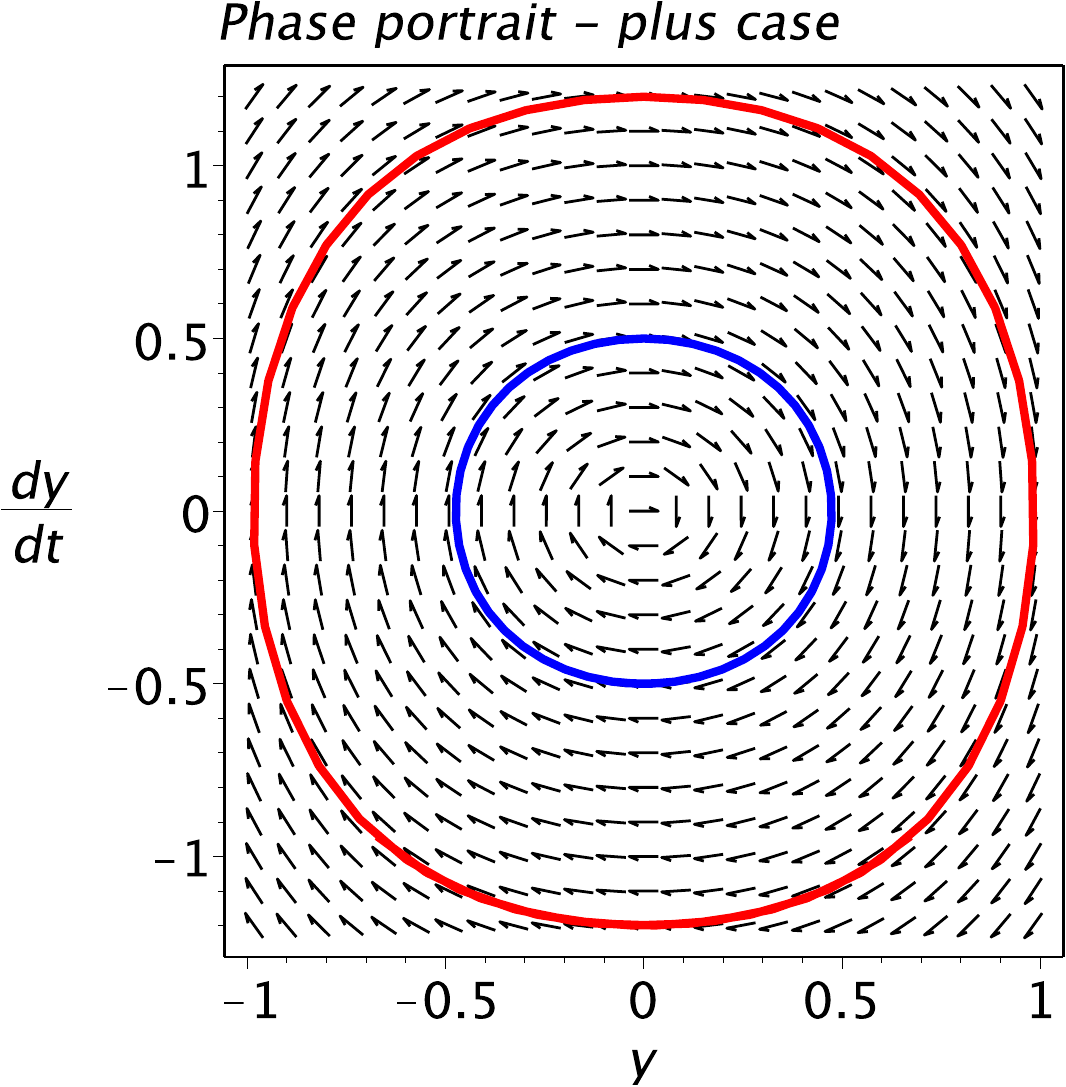}}
\caption{Part of the phase plane of \eqref{ode_y_plus} for $p=3$ with two periodic orbits.}
\label{figure_plus}
\end{figure}
Together with the defining equation for $N(c)$ this yields 
\begin{align}
L(c) & = 4 \int_0^{N(c)} \frac{1}{\sqrt{c-y^2-\frac{2}{p+1}y^{p+1}}}\,dy  \nonumber\\
& = 4 \int_0^1 \frac{N(c)}{\sqrt{c-N(c)^2z^2- \frac{2}{p+1} N(c)^{p+1}z^{p+1}}}\,dz  \label{L_darstellung_plus}\\
& = 4 \int_0^1 \frac{1}{\sqrt{1-z^2+\frac{2}{p+1} N(c)^{p-1}(1-z^{p+1})}} \,dz. \nonumber
\end{align}
Now we find that $L(c)$ has the asserted smoothness properties and is strictly decreasing with $L'<0$ on $(0,\infty)$, $\lim_{c\to \infty} L(c)=0$ and 
$$
\lim_{c\to 0} L(c) = 4 \int_0^1 \frac{1}{\sqrt{1-z^2}}\,dz = 2\pi.
$$
The fact that $L'<0$ on $(0,\infty)$ follows from  $N'>0$ on $(0,\infty)$ and \eqref{f_prime_plus} in the proof of Lemma~\ref{expand_plus}. This yields also that $M=L^{-1}\in C^\infty\bigl((0,2\pi)\bigr)$. The expansions for $\sqrt{M}$ and its derivatives  can be found in Lemma~\ref{expand_plus} in the Appendix.
\end{proof}

\begin{lemma}
Define the function $A_-: \R^2\to \R$ by $A_-(\xi,\eta):= \eta^2+\xi^2- \frac{2}{p+1}|\xi|^{p+1}$. Then $A_-$ is a first integral for \eqref{ode_y_minus}, i.e., every solution $y$ of \eqref{ode_y_minus} satisfies $A_-(y,\dot y)=\const=c$ for some $c\in \R$. Every bounded orbit of \eqref{ode_y_minus} is uniquely characterized by the value $c\in [0,\frac{p-1}{p+1}]$ and for $c\in [0,\frac{p-1}{p+1})$ every solution $y$ on such an orbit is periodic with minimal period $L(c)$ and maximal amplitude $N(c):= \max_{t\in \R}|y(t)|$. Then 
\begin{itemize}
\item[(i)] $L,N \in C\bigl([0, \frac{p-1}{p+1})\bigr)\cap C^\infty \bigl((0,\frac{p-1}{p+1})\bigr)$ and $L\bigl([0,\frac{p-1}{p+1})\bigr)=[2\pi,\infty)$, $N\bigl([0,\frac{p-1}{p+1})\bigr)=[0,1)$.
\item[(ii)]$N$ is strictly increasing in $c$ with $N'>0$ on $(0,\frac{p-1}{p+1})$ , $N(c)\leq \sqrt{\frac{p+1}{p-1}c}$ for all $c\in [0,\frac{p-1}{p+1})$ and $\lim_{c\to \frac{p-1}{p+1}} N(c)=1$, $\lim_{c\to 0} \frac{N(c)}{\sqrt{c}}=1$.
\item[(iii)] $L(c)$ is strictly increasing in $c$ with $L'>0$ on $(0,\frac{p-1}{p+1})$, $\lim_{c\to \frac{p-1}{p+1}} L(c)=\infty$ and $L(0) = 2\pi$.
\item[(iv)] $M= L^{-1}: [2\pi,\infty) \to [0,\frac{p-1}{p+1})$ is in $C^\infty\bigl((2\pi,\infty)\bigr)$ and has the following expansions as $s \to 2\pi+$
\begin{align*}
\sqrt{M(s)} &= \sqrt{\alpha}(s-2\pi)^\frac{1}{p-1}(1+O(s-2\pi)), \\
{\sqrt{M(s)}\,}' &= \frac{\sqrt{\alpha}}{p-1} (s-2\pi)^\frac{2-p}{p-1}(1+ O(s-2\pi)), \\
{\sqrt{M(s)}\,}'' &= \frac{\sqrt{\alpha}(2-p)}{(p-1)^2} (s-2\pi)^\frac{3-2p}{p-1}(1+ O(s-2\pi))
\end{align*}
for the same constant $\alpha>0$ as in Lemma~\ref{phase_plane_plus}. 
\end{itemize}
\label{phase_plane_minus}
\end{lemma}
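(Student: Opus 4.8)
The plan is to run the same scheme as in the proof of Lemma~\ref{phase_plane_plus}, carefully tracking the sign change in the nonlinearity, and to defer the two-term asymptotic inversions to the Appendix exactly as was done for the plus case. First I would verify that $A_-$ is a first integral by differentiating along a solution of \eqref{ode_y_minus}: $\frac{d}{dt}A_-(y,\dot y)=2\dot y\,(\ddot y+y-|y|^{p-1}y)=0$. Writing $A_-(\xi,\eta)=\eta^2+W(\xi)$ with the potential $W(\xi)=\xi^2-\frac{2}{p+1}|\xi|^{p+1}$, one has $W(0)=0$, $W'(\xi)=2\xi(1-|\xi|^{p-1})$, so $W$ has a local minimum at $\xi=0$ and local maxima at $\xi=\pm1$ with $W(\pm1)=\frac{p-1}{p+1}$. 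Hence the sublevel set $\{A_-\le c\}$ has a bounded component around the origin precisely for $c\in[0,\frac{p-1}{p+1}]$: for $c\in(0,\frac{p-1}{p+1})$ it is a closed curve carrying a periodic orbit (no equilibria on it, since $W'\ne0$ there), for $c=0$ it degenerates to the origin, and for $c=\frac{p-1}{p+1}$ it is the homoclinic loop through $(\pm1,0)$, bounded but not periodic. For $c>\frac{p-1}{p+1}$ orbits are unbounded. This establishes the asserted parametrization of the bounded orbits.

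\textbf{The amplitude $N(c)$.} For $c\in[0,\frac{p-1}{p+1})$ the turning points of the orbit are the zeros of $c-W$; since $W$ is strictly increasing from $0$ to $\frac{p-1}{p+1}$ on $[0,1]$, there is a unique $N(c)\in[0,1)$ with
\[
N(c)^2-\tfrac{2}{p+1}N(c)^{p+1}=c .
\]
Smoothness and strict monotonicity ($N'>0$ on $(0,\frac{p-1}{p+1})$) follow from the implicit function theorem using $W'(N(c))>0$; continuity up to $c=0$ and the limits $N(c)\to1$ as $c\to\frac{p-1}{p+1}$ and $N(c)/\sqrt c\to1$ as $c\to0$ are read off the defining equation. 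The bound $N(c)\le\sqrt{\frac{p+1}{p-1}c}$ comes from $N^2=c+\frac{2}{p+1}N^{p+1}\le c+\frac{2}{p+1}N^2$ (using $N<1$), solved for $N^2$.

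\textbf{The period $L(c)$.} Solving $\dot y^2=c-W(y)$ for $\dot y$ in the four quadrants and using that $W$ is even gives, just as in \eqref{L_darstellung_plus},
\[
L(c)=4\int_0^{N(c)}\frac{dy}{\sqrt{c-y^2+\frac{2}{p+1}y^{p+1}}}=4\int_0^1\frac{dz}{\sqrt{(1-z^2)-\frac{2}{p+1}N(c)^{p-1}(1-z^{p+1})}}
\]
after the substitution $y=N(c)z$ and insertion of the defining equation for $N(c)$; one checks that the radicand stays positive on $(0,1)$ for $c<\frac{p-1}{p+1}$. Letting $c\to0$ (so $N(c)\to0$) yields $L(0)=4\int_0^1(1-z^2)^{-1/2}\,dz=2\pi$. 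For the other endpoint, as $c\to\frac{p-1}{p+1}$ the integrand increases monotonically to $\bigl((1-z^2)-\frac{2}{p+1}(1-z^{p+1})\bigr)^{-1/2}$, and a Taylor expansion at $z=1$ gives $(1-z^2)-\frac{2}{p+1}(1-z^{p+1})=(p-1)(1-z)^2\bigl(1+o(1)\bigr)$, so the limit integrand behaves like $\frac{1}{\sqrt{p-1}\,(1-z)}$ near $z=1$ and is non-integrable; by monotone convergence $L(c)\to\infty$. Strict monotonicity $L'>0$ on $(0,\frac{p-1}{p+1})$ follows as in the plus case: since $1-z^{p+1}\ge0$ on $[0,1]$ and $N'>0$, increasing $c$ decreases the radicand pointwise and hence increases the integrand; the quantitative version is the content of the Appendix expansion that plays the role Lemma~\ref{expand_plus} plays for the plus case.

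\textbf{The inverse $M=L^{-1}$ and the expansions.} Since $L$ is continuous on $[0,\frac{p-1}{p+1})$, strictly increasing, with $L(0)=2\pi$ and $L(c)\to\infty$, it is a homeomorphism onto $[2\pi,\infty)$; together with $L'>0$ and $L\in C^\infty((0,\frac{p-1}{p+1}))$ the inverse function theorem gives $M\in C^\infty((2\pi,\infty))$. The expansions in (iv) are obtained by inverting the small-$c$ expansion $L(c)=2\pi+\kappa\,N(c)^{p-1}(1+O(N(c)^{p-1}))$ — with $\kappa=\frac{2}{p+1}\int_0^1\frac{1-z^{p+1}}{(1-z^2)^{3/2}}\,dz>0$, the very same integral (up to sign) that appears for the plus case, which is why the constant $\alpha$ agrees with that of Lemma~\ref{phase_plane_plus} — and then differentiating the resulting relation $M(s)=\alpha(s-2\pi)^{2/(p-1)}(1+O(s-2\pi))$ twice. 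I expect the main obstacle to be exactly the two delicate points that are also relegated to the Appendix in the plus case: extracting the precise $(1-z)^{-1}$ singularity responsible for $L(c)\to\infty$ (this needs the quadratic, rather than linear, vanishing of the radicand at $z=1$ when $N=1$), and controlling the error terms in the second derivative ${\sqrt{M(s)}\,}''$ near $s=2\pi{+}$, which requires differentiating the implicitly defined inverse twice while keeping the remainders genuinely $O(s-2\pi)$. Everything else is the routine sign-adjusted mirror of Lemma~\ref{phase_plane_plus}.
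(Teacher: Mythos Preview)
Your proposal is correct and follows essentially the same route as the paper: the same defining equation for $N(c)$, the same integral representation and substitution for $L(c)$, the same Taylor analysis $(1-z^2)-\tfrac{2}{p+1}(1-z^{p+1})=(p-1)(1-z)^2(1+o(1))$ to extract the divergence $L(c)\to\infty$, and the expansions in (iv) deferred to an Appendix lemma (Lemma~\ref{expand_minus}) built on writing $M=\Phi\circ F^{-1}$. Two small slips worth fixing: the level set $\{A_-=\tfrac{p-1}{p+1}\}$ consists of two \emph{heteroclinic} connections between the saddles $(\pm1,0)$, not a homoclinic loop, and your coefficient $\kappa$ is off by a factor of~$2$ (the first-order term is $\tfrac{4}{p+1}N^{p-1}\int_0^1\tfrac{1-z^{p+1}}{(1-z^2)^{3/2}}\,dz$), though neither affects the structure of the argument.
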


\begin{proof} Besides the equilibrium $(0,0)$ there are two further equilibria $(\pm 1,0)$ connected by two heteroclinic orbits. The first integral 
$$
|\dot y|^2 + |y|^2 - \frac{2}{p+1}|y|^{p+1} = c
$$
leads to closed orbits for $0< c < A_-(\pm 1,0)=\frac{p-1}{p+1}$ and provided initial conditions are chosen in the bounded component of the set $A_-^{-1}\bigl([0,\frac{p-1}{p+1})\bigr)$, cf. Figure~\ref{figure_minus}.
\begin{figure}
\scalebox{0.5}{\includegraphics{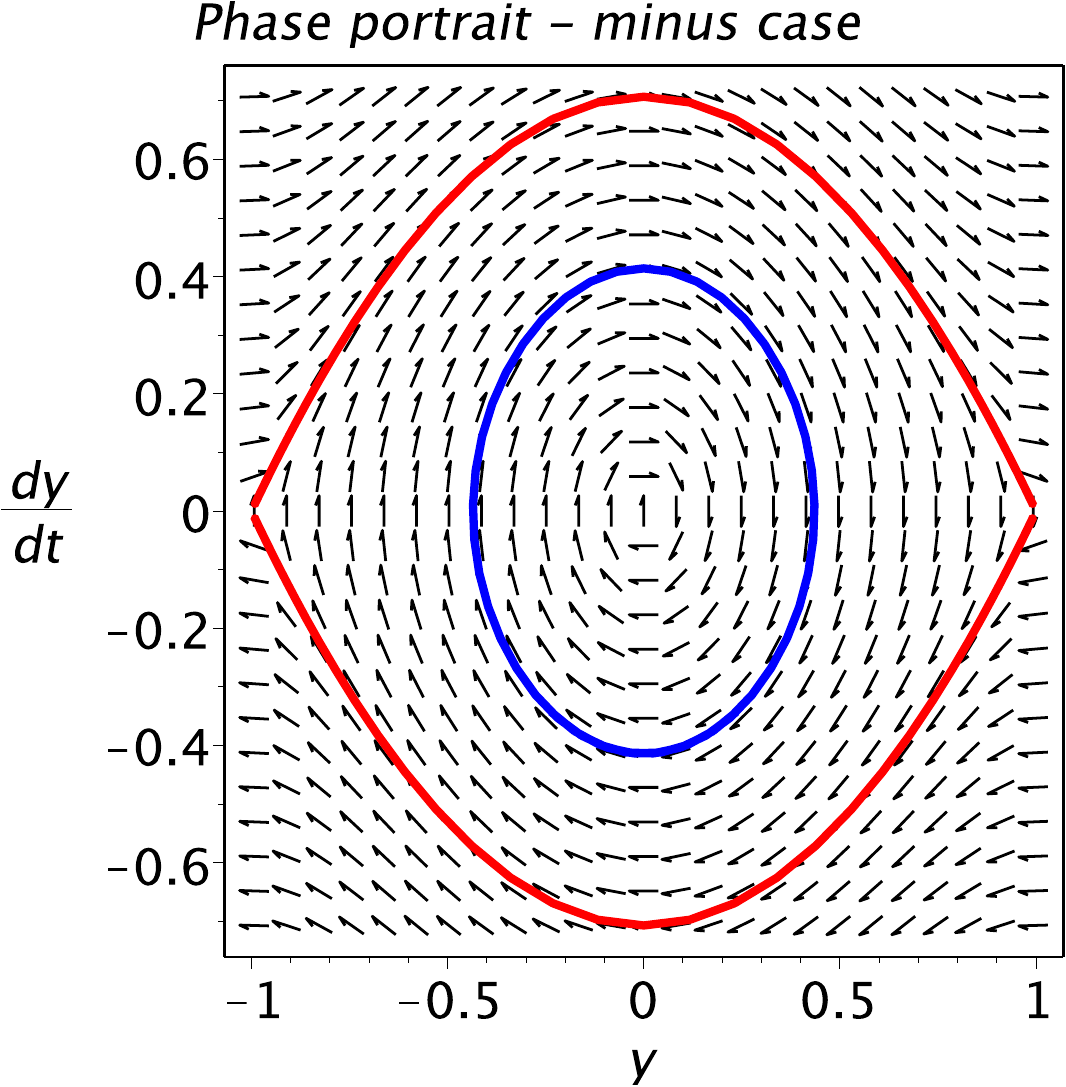}}
\caption{Part of the phase plane of \eqref{ode_y_minus} for $p=3$ with a periodic orbit (blue) and two heteroclinic connections (red).}
\label{figure_minus}
\end{figure}
The defining equation for the function $N(c)$ is
\begin{equation}
N(c)^2- \frac{2}{p+1} N(c)^{p+1} = c \mbox{ and } N(c) < 1
\label{def_N_minus}
\end{equation}
which provides the strict monotonicity, continuity and differentiability properties of $N(c)$ for $0<c<\frac{p-1}{p+1}$. Since $N(c)<1$ we obtain from \eqref{def_N_minus} the inequality $N(c)\leq \sqrt{\frac{p+1}{p-1} c}$, the fact that $N'>0$ on $(0,\frac{p-1}{p+1})$, and $\lim_{c\to 0} \frac{N(c)}{\sqrt{c}}=1$. Moreover, $N(c)\to 1$ as $c\nearrow \frac{p-1}{p+1}$. This completes the statements on $N(c)$. 

\medskip

Now we turn to $L(c)$. This time \eqref{def_N_minus} together with the first integral yields 
\begin{align}
L(c) & = 4 \int_0^{N(c)} \frac{1}{\sqrt{c-y^2+\frac{2}{p+1}y^{p+1}}}\,dy  \nonumber\\
& = 4 \int_0^1 \frac{N(c)}{\sqrt{c-N(c)^2z^2+\frac{2}{p+1} N(c)^{p+1}z^{p+1}}}\,dz  \label{L_darstellung_minus}\\
& = 4 \int_0^1 \frac{1}{\sqrt{1-z^2-\frac{2}{p+1} N(c)^{p-1}(1-z^{p+1})}} \,dz. \nonumber
\end{align} 
Clearly $L(0)=2\pi$. Moreover $L(c)$ has the asserted smoothness properties, and is strictly increasing since $N(c)$ is strictly increasing for $c\in [0,\frac{p-1}{p+1}]$. Since $\lim_{c\to \frac{p-1}{p+1}} N(c)=1$ and $1-z^2-\frac{2}{p+1}(1-z^{p+1})= (p-1)(1-z)^2(1+o(1))$ as $z\to 1$ we see now that $\lim_{c\to \frac{p-1}{p+1}} L(c)=\infty$. The fact that $L'>0$ on $(0,\frac{p-1}{p+1})$ follows from  $N'>0$ on $(0,\infty)$ and \eqref{f_prime_minus} in the proof of Lemma~\ref{expand_plus}. This yields also that $M=L^{-1}\in C^\infty\bigl((2\pi,\infty)\bigr)$. The expansions for $\sqrt{M}$ and its derivatives  can be found in Lemma~\ref{expand_minus} in the Appendix.
\end{proof}

\noindent
{\bf Proof of Theorem~\ref{ex_plus}:} We begin by choosing a $C^2$-curve $\gamma: [0,\infty)\to \R^2$ in phase space such that $A_+(\gamma(c))=c^2$, where $A_+$ is the first integral from Lemma~\ref{phase_plane_plus}. Such a curve is e.g. given by $\gamma(c)=(0,c)$. There is a continuum of other possible choices of $\gamma$. The choice of $\gamma$ actually only selects a particular member of the continuum of phase-shifted breathers as described in Lemma~\ref{phase_shift} (we will comment on this aspect at the end of the proof).

\medskip

Let us denote by $y(t;c)$ the solution of \eqref{ode_y_plus} with $\bigl(y(0;c),\dot y(0;c))\bigr)=\gamma(c)$. Then $y:\R\times [0,\infty)\to \R$ is a $C^2$-function and $y(t;c)$ is $L(c^2)$-periodic in the $t$-variable. Now we define the solution $\psi$ of \eqref{ode_psi}$_+$ by 
\begin{equation}
\psi(r,t) := \tau(r) y(\sigma(r)t; c) \quad \mbox{ with } \quad \sigma(r) = \left(\frac{\tilde q(r)}{\tilde s(r)}\right)^{1/2}, \quad \tau(r) = \left(\frac{\tilde q(r)}{\tilde V(r)}\right)^\frac{1}{p-1},
\label{def_psi_plus}
\end{equation}
see \eqref{ansatz}, \eqref{choice_sigma_tau}. The requirement of $T$-periodicity of $\psi$ in the $t$-variable tells us how to choose $c$ as a function of the radial variable $r\in [0,\infty)$, i.e.,
$$
g(r) := \sigma(r) T \stackrel{!}{=} L(c^2). 
$$
Recall from Lemma~\ref{phase_plane_plus} the definition $M=L^{-1}$ and that $M:(0,2\pi]\to \R$ is strictly decreasing and $C^\infty$ on $(0,2\pi)$. Now
\begin{equation}
\label{def_c_plus}
c(r) = \sqrt{M(g(r))}
\end{equation}
has to be inserted into \eqref{def_psi_plus}. Note that the assumption (H1) of Theorem~\ref{ex_plus} guarantees that $c(r)$ is well-defined and $C^2$ on $(0,\infty)$. Next we show that $\psi(r,t)$ tends to $0$ as $r\to 0$ and is even exponentially decaying to $0$ as $r\to \infty$. First note the estimate 
\begin{align*}
|\psi(r,t)| & \leq \left(\frac{\tilde q(r)}{\tilde V(r)}\right)^\frac{1}{p-1} N(c(r)^2)  \\
 & \leq \underbrace{\left(\frac{\tilde q(r)}{\tilde V(r)}\right)^\frac{1}{p-1}}_{\leq B}c(r) \mbox{ by assumption (H4) and Lemma~\ref{phase_plane_plus}(ii)}\\
 & \leq B \sqrt{M(g(r))}. 
\end{align*}
By assumptions (H2) and (H3) of Theorem~\ref{ex_plus} the argument of $M$ in the above inequality tends to $2\pi$ as $r\to \infty$ and as $r\to 0$. By Lemma~\ref{phase_plane_plus}(iv) we have the estimate 
\begin{equation}
\label{est_psi_plus}
|\psi(r,t)| \leq  B \sqrt{\alpha}\left(2\pi-g(r)\right)^\frac{1}{p-1}O(1)  \mbox{ as } r\to \infty \mbox{ and as } r\to 0.
\end{equation}
Assumption (H3) of Theorem~\ref{ex_plus} and \eqref{est_psi_plus} yield 
$|\psi(r,t)| \leq  C \exp(-\delta r)$ for $r\geq 0$ which proves the exponential decay of $U(x,t)=\psi(|x|,t)\frac{x}{|x|}$ as $|x|\to\infty$. 

\medskip

Next we see that \eqref{est_psi_plus} and (H2) imply $\psi(0,t)=0$. In order to apply Lemma~\ref{ansatz_psi} it remains to prove $\psi \in C^2([0,\infty)\times \R)$ and that $\psi''(0,t)=0$. For this we compute from \eqref{def_c_plus} that $c(r)=\sqrt{\alpha}(2\pi-g(r))^\frac{1}{p-1}O(1)\to 0$ as $r\to 0$. Furthermore \eqref{def_c_plus} implies
\begin{align*}
c'(r) &= {\sqrt{M}\,}'(g(r))g'(r) \\
&=-\frac{\sqrt{\alpha}}{p-1} (2\pi-g(r))^\frac{2-p}{p-1}O(1)g'(r)  \quad \mbox{ by Lemma~\ref{phase_plane_plus}(iv)}\\
&= \sqrt{\alpha} \left((2\pi-g(r))^\frac{1}{p-1}\right)'O(1)\\
&= o(1) \mbox{ as $r\to 0$ by assumption (H2).}
\end{align*}
Likewise 
\begin{align*}
c''(r) =& {\sqrt{M}\,}''(g(r))g'(r)^2 + {\sqrt{M}\,}'(g(r))g''(r) \\
=& \frac{\sqrt{\alpha}(2-p)}{(p-1)^2} (2\pi-g(r))^\frac{3-2p}{p-1}\bigl(1+O(2\pi-g(r))\bigr)g'(r)^2 \\
& -\frac{\sqrt{\alpha}}{p-1} (2\pi-g(r))^\frac{2-p}{p-1}\bigl(1+O(2\pi-g(r))\bigr)g''(r) \\
= & \sqrt{\alpha}\underbrace{\left((2\pi-g(r))^\frac{1}{p-1}\right)''}_{=:T_1} +O(1) \frac{\sqrt{\alpha}(2-p)}{(p-1)^2}\underbrace{(2\pi-g(r))^\frac{2-p}{p-1}g'(r)^2}_{=:T_2} \\
& - O(1) \frac{\sqrt{\alpha}}{p-1}\underbrace{(2\pi-g(r))^\frac{1}{p-1}g''(r)}_{=:T_3}. 
\end{align*}
The term $T_1$ converges to $0$ as $r\to 0$ by assumption (H2). Recall that $g$ is a $C^2$-function on $[0,\infty)$. The term $T_3$ converges to $0$ since $g(r)\to 2\pi$ as $r\to 0$ and $g''$ is bounded near $0$. And since $T_2(r)=(1-p)\left((2\pi-g(r))^\frac{1}{p-1}\right)'g'(r)$ with $g'$ being bounded near $0$ we see that (H2) also implies $T_2(r)\to 0$ as $r\to 0$. This shows that $c''(r)\to 0$ as $r\to 0$. Hence $c$ can be extended to a function $c\in C^2\bigl([0,\infty)\bigr)$ with $c(0)=c'(0)=c''(0)=0$. Having this and recalling $\psi(r,t)=\tau(r)y(\sigma(r)t,c(r))$ we see that $\psi \in C^2([0,\infty)\times \R)$. Hence we may compute
\begin{align*}
\psi'(r,t) & = \tau'(r) y(\sigma(r)t, c(r)) + \tau(r) \dot y(\sigma(r)t, c(r))\sigma'(r) t + \tau(r) \frac{\partial y}{\partial c}(\sigma(r)t,c(r))c'(r)
\end{align*}
and 
\begin{align*}
\psi''(0,t) =& \tau''(0) \underbrace{y(\sigma(0)t,c(0))}_{=0} + 2 \tau'(0)\underbrace{\dot y(\sigma(0)t,c(0))}_{=0}\sigma'(0)t +2 \tau'(0)\frac{\partial y}{\partial c}(\sigma(0)t,c(0))\underbrace{c'(0)}_{=0} \\
& + \tau(0)\underbrace{\ddot y(\sigma(0)t,c(0))}_{=0}\sigma'(0)^2t^2+ \tau(0)\underbrace{\dot y(\sigma(0)t,c(0))}_{=0}\sigma''(0)t \\
& + 2\tau(0)\frac{\partial \dot y}{\partial c}(\sigma(0)t,c(0))\sigma'(0)t\underbrace{c'(0)}_{=0} +\tau(0)\frac{\partial^2 y}{\partial c^2}(\sigma(0)t,c(0))\underbrace{c'(0)^2}_{=0} \\
& + \tau(0)\frac{\partial y}{\partial c}(\sigma(0)t,c(0))\underbrace{c''(0)}_{=0} \\
=&0,
\end{align*}
where we have used $y(\cdot,0)=0$, $\dot y(\cdot,0)=0$, $\ddot y(\cdot,0)=0$. By Lemma~\ref{ansatz_psi} this implies that $U\in C^2(\R^3\times \R)$. The asserted continuum of solutions is now given by Lemma~\ref{phase_shift}. This finishes the proof of Theorem~\ref{ex_plus}.  

\medskip

Now we will comment on the choice of the initial curve $\gamma(c)=(0,c)$ which led to the solution family $y(t;c)$ such that $(y(0;c), \dot y(0;c))=\gamma(c)$. Our objective was to determine \emph{some} $C^2$-curve such that $A_+(\gamma(c))=c^2$. The particular choice $\gamma(c)=(0,c)$ is convenient but arbitrary. Let us explain other possible choices of $\gamma$. E.g. take
$$
\tilde \gamma(c) := \bigl(y(b(c);c), \dot y(b(c);c)\bigr)
$$
for an arbitrary function $b\in C^2([0,\infty);\R)$. Clearly, $A_+(\tilde\gamma(c))=A_+\left(y(b(c);c), \dot y(b(c);c)\right)=c^2$ since $A_+$ is a first integral of \eqref{ode_y_plus}. With the new curve $\tilde \gamma$ we can define a new solution family $\tilde y(t;c)$ through the initial conditions
$$
\bigl(\tilde y(0;c), \dot{\tilde y}(0;c)\bigr) = \tilde\gamma(c)
$$
By uniqueness of the initial value problem the new and old solution families have the simple relation
$$
\tilde y(t;c) = y(t+b(c);c).
$$
In order to see the effect of the choice of the new curve let us compare the solutions $U$, $\tilde U$ generated by $\gamma$, $\tilde\gamma$, i.e.,
$$
U(x,t) = \tau(r) y(\sigma(r) t;c(r))\frac{x}{|x|},
$$
where $c(r)=\sqrt{L^{-1}(\sigma(r))}$. Likewise
\begin{align*}
\tilde U(x,t) &= \tau(r) \tilde y(\sigma(r)t; c(r))\frac{x}{|x|} \\
&= \tau(r)y(\sigma(r)t+b(c(r));c(r))\frac{x}{|x|} \\
&= U(x,t+a(r)),
\end{align*}
where $a(r) = b(c(r))/\sigma(r)$ is a $C^2$-function on $[0,\infty)$. Hence, this different choice of the initial curve led to a phase-shifted breather as already explained in Lemma~\ref{phase_shift}.
\qed

\medskip

\noindent
{\bf Proof of Theorem~\ref{ex_minus}:} Again we choose a $C^2$-curve $\gamma: [0,\frac{p-1}{p+1})\to \R^2$ in phase space such that $A_-(\gamma(c))=c$. Now $A_-$ is the first integral from Lemma~\ref{phase_plane_minus}. As before, such a curve is e.g. $\gamma(c)=(0,c)$. The fact that other choices of $\gamma$ are also possible and just lead to a phase shift as shown in Lemma~\ref{phase_shift} has already been explained at the end of the proof of Theorem~\ref{ex_plus}. We denote by $y(t;c)$ the solution of \eqref{ode_y_minus} with $\bigl(y(0;c),\dot y(0;c))\bigr)=\gamma(c)$. Then $y:\R\times [0,\frac{p-1}{p+1})\to \R$ is a $C^2$-function and $y(t;c)$ is $L(c)$-periodic in the $t$-variable. A solution $\psi$ of \eqref{ode_psi}$_-$ is then defined by 
\begin{equation}
\psi(r,t) := \tau(r) y(\sigma(r)t; c) \quad \mbox{ with } \quad \sigma(r), \tau(r) \mbox{ as previously.}
\label{def_psi_minus}
\end{equation}
The condition of $T$-periodicity of $\psi$ in the $t$-variable is the same as before and requires 
$$
g(r) := \sigma(r) T \stackrel{!}{=} L(c^2). 
$$
Now the inverse $M=L^{-1}$ is defined on $[2\pi,\infty)\to \R$ as a continuous, strictly increasing function which is $C^\infty$ on $(2\pi,\infty)$, cf. Lemma~\ref{phase_plane_minus}. Assumption (H1)' of Theorem~\ref{ex_minus} guarantees that
$$
c(r) = \sqrt{M(g(r))}
$$
is well-defined and $C^2$ on $(0,\infty)$. Inserting $c(r)$  into \eqref{def_psi_minus} yields a $T$-periodic solution $\psi(r,t)$ of \eqref{ode_psi}$_-$. We proceed via the estimate
\begin{align*}
|\psi(r,t)| & \leq \left(\frac{\tilde q(r)}{\tilde V(r)}\right)^\frac{1}{p-1} N(c(r)^2)  \\
 & \leq \underbrace{\left(\frac{\tilde q(r)}{\tilde V(r)}\right)^\frac{1}{p-1}}_{\leq B} \sqrt{\frac{p+1}{p-1}}c(r) \mbox{ by assumption (H4) and Lemma~\ref{phase_plane_minus}(ii)}\\
 & = B \sqrt{\frac{p+1}{p-1}}\sqrt{M(g(r)}.
\end{align*}
As before, (H2) and (H3) imply that the argument of $M$ in the above inequality tends to $2\pi$ as $r\to \infty$ and as $r\to 0$. Making use of the estimate in Lemma~\ref{phase_plane_minus}(iv) we obtain
\begin{equation}
\label{est_psi_minus}
|\psi(r,t)| \leq  B\sqrt{\frac{p+1}{p-1}} \sqrt{\alpha}\left(g(r)-2\pi\right)^\frac{1}{p-1}O(1)  \mbox{ as } r\to \infty \mbox{ and as } r\to 0.
\end{equation}
As before assumption (H3) leads to the exponential decay of $U(x,t)$ as $|x|\to \infty$. Similarly to the proof of Theorem~\ref{ex_plus} the expansions of $\sqrt{M}$, ${\sqrt{M}\,}'$ and ${\sqrt{M}\,}''$ and (H2) imply $c'(0)=c''(0)=0$ which leads in an identical way as before to $\psi''(0,t)=0$ and thus $U\in C^2(\R^3\times\R)$.
\qed

\medskip

\noindent
{\bf Proof of Theorem~\ref{ex_complex}:} We use the ansatz $U(x,t)= \phi(|x|) e^{i\frac{2\pi}{T}t} \frac{x}{|x|}$. According to Lemma~\ref{ansatz_psi} it represents a $T$-periodic breather if $\phi: [0,\infty)\to \R$ is a $C^2$-solution of 
$$
-\left(\frac{2\pi}{T}\right)^2 \tilde s(r) + \tilde q(r) \pm \tilde V(r) |\phi(r)|^{p-1} = 0 \mbox{ with } \phi(0)=\phi''(0)=0
$$
which exponentially decays to zero at $\infty$. This can be satisfied for 
$$
\phi(r) := \left[\pm\left(\left(\frac{2\pi}{T}\right)^2 \frac{\tilde s(r)}{\tilde q(r)}-1\right)\frac{\tilde q(r)}{\tilde V(r)}\right]^\frac{1}{p-1}. 
$$
The assumptions (H1), (H1)' guarantee that $\phi$ is well-defined. By (H3), (H4) it is exponentially decreasing as $r\to \infty$ and by (H2) we see that $\phi(0)=\phi''(0)=0$ so that $U(x,t)$ is a classical solution of \eqref{semilinear}$_\pm$ on $\R^3\times \R$. \qed

\section*{Appendix}

\begin{lemma}[Expansion of $M=L^{-1}$ for \eqref{ode_y_plus}] \label{expand_plus}
$M: (0,2\pi] \to [0,\infty)$ is $C^\infty$ on $(0,2\pi)$ and has the following expansions as $s \to 2\pi-$
\begin{align*}
M(s) & = \alpha (2\pi-s)^\frac{2}{p-1}(1+O(2\pi-s)), \\
\sqrt{M(s)} & = \sqrt{\alpha}(2\pi-s)^\frac{1}{p-1}(1+O(2\pi-s)), \\
M'(s) &= -\frac{2\alpha}{p-1}(2\pi-s)^\frac{3-p}{p-1}(1+ O(2\pi-s)), \\
{\sqrt{M(s)}\,}' & = -\frac{\sqrt{\alpha}}{p-1} (2\pi-s)^\frac{2-p}{p-1}(1+ O(2\pi-s)),\\
M''(s) &= \frac{2\alpha(3-p)}{(p-1)^2} (2\pi-s) ^\frac{4-2p}{p-1}(1+O(2\pi-s)), \\
{\sqrt{M(s)}\,}'' &= \frac{\sqrt{\alpha}(2-p)}{(p-1)^2} (2\pi-s)^\frac{3-2p}{p-1}(1+ O(2\pi-s))
\end{align*}
for some constant $\alpha>0$.
\end{lemma}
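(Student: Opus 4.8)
The plan is to reduce the whole statement to the single real‑variable function
\[
f(A):=4\int_0^1\frac{dz}{\sqrt{1-z^2+\tfrac{2}{p+1}A(1-z^{p+1})}},\qquad A\ge 0,
\]
for which the last line of the display \eqref{L_darstellung_plus} in the proof of Lemma~\ref{phase_plane_plus} reads $L(c)=f\bigl(N(c)^{p-1}\bigr)$. First I would record the smoothness of $f$: writing $1-z^2+\tfrac{2}{p+1}A(1-z^{p+1})=(1-z)\,G(z,A)$ with $G(z,A):=(1+z)+\tfrac{2A}{p+1}\bigl(1+z+\dots+z^p\bigr)\ge 1$ on $[0,1]\times[0,\infty)$, we get $f(A)=4\int_0^1(1-z)^{-1/2}G(z,A)^{-1/2}\,dz$, where $(1-z)^{-1/2}\in L^1(0,1)$ is independent of $A$ while $G^{-1/2}$, together with all its derivatives in $A$, is continuous and bounded on $[0,1]\times[0,A_0]$ for every $A_0$. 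Hence $f\in C^\infty([0,\infty))$, differentiation under the integral sign is legitimate, $f(0)=4\int_0^1(1-z^2)^{-1/2}\,dz=2\pi$, and
\begin{equation}
f'(A)=-\frac{4}{p+1}\int_0^1\frac{1-z^{p+1}}{\bigl(1-z^2+\tfrac{2}{p+1}A(1-z^{p+1})\bigr)^{3/2}}\,dz<0\qquad\text{for all }A\ge 0 ,
\label{f_prime_plus}
\end{equation}
the integral being finite because its integrand equals $\bigl(1+z+\dots+z^p\bigr)\bigl((1-z)^{1/2}G(z,A)^{3/2}\bigr)^{-1}\le(p+1)(1-z)^{-1/2}$. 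Put $\kappa:=-f'(0)\in(0,\infty)$. Since $N$, hence $N(\cdot)^{p-1}$, is $C^\infty$ on $(0,\infty)$ by the implicit function theorem applied to $N^2+\tfrac{2}{p+1}N^{p+1}=c$, \eqref{f_prime_plus} also shows $L'<0$ on $(0,\infty)$ and therefore $M=L^{-1}\in C^\infty\bigl((0,2\pi)\bigr)$.

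Next I would invert $f$ near $s=2\pi$. As $f'(0)=-\kappa\neq 0$, the inverse function theorem provides a $C^\infty$ local inverse $h$ of $f$ near $2\pi$ with $h(2\pi)=0$ and $h'(2\pi)=-1/\kappa$. From $L(c)=f\bigl(N(c)^{p-1}\bigr)=s$ we read off $N(c)^{p-1}=h(s)$, so, using $c=M(s)=N(c)^2+\tfrac{2}{p+1}N(c)^{p+1}$,
\[
M(s)=h(s)^{\frac{2}{p-1}}\Bigl(1+\tfrac{2}{p+1}h(s)\Bigr)\qquad\text{for }s<2\pi\text{ near }2\pi .
\]
By Taylor's theorem with integral remainder (Hadamard's lemma), $h(s)=(2\pi-s)\,k(s)$ with $k\in C^\infty$ near $2\pi$ and $k(2\pi)=-h'(2\pi)=1/\kappa>0$; hence $k>0$ near $2\pi$, the power $k^{2/(p-1)}$ is $C^\infty$ there, and
\[
M(s)=(2\pi-s)^{\frac{2}{p-1}}P(s),\qquad P(s):=k(s)^{\frac{2}{p-1}}\Bigl(1+\tfrac{2}{p+1}(2\pi-s)k(s)\Bigr),
\]
with $P\in C^\infty$ near $2\pi$ and $P(2\pi)=\kappa^{-2/(p-1)}=:\alpha>0$. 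Since $M\ge 0$ and $P(2\pi)>0$, this also gives $\sqrt{M(s)}=(2\pi-s)^{\frac{1}{p-1}}\sqrt{P(s)}$ with $\sqrt P\in C^\infty$ near $2\pi$ and $\sqrt{P(2\pi)}=\sqrt\alpha$. (This $\alpha$ is the same constant as in Lemma~\ref{phase_plane_minus}, because the minus case produces the same value $\kappa$.)

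It then remains to differentiate. For $\gamma\in\R$ and $\Phi\in C^\infty$ near $2\pi$,
\[
\frac{d}{ds}\Bigl[(2\pi-s)^{\gamma}\Phi(s)\Bigr]=(2\pi-s)^{\gamma-1}\bigl[-\gamma\Phi(s)+(2\pi-s)\Phi'(s)\bigr],
\]
and the bracketed factor is again $C^\infty$ near $2\pi$, with value $-\gamma\Phi(2\pi)$ there; moreover any $\Psi\in C^\infty$ near $2\pi$ satisfies $\Psi(s)=\Psi(2\pi)+O(2\pi-s)$. Applying this identity to $M=(2\pi-s)^{2/(p-1)}P$ and then to its derivative, the exponent runs $\tfrac{2}{p-1}\to\tfrac{3-p}{p-1}\to\tfrac{4-2p}{p-1}$ and the leading constants come out as $\alpha$, $-\tfrac{2\alpha}{p-1}$, $\tfrac{2\alpha(3-p)}{(p-1)^2}$, which is exactly the asserted expansion of $M$, $M'$, $M''$. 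Applying the same identity twice to $\sqrt M=(2\pi-s)^{1/(p-1)}\sqrt P$, the exponent runs $\tfrac{1}{p-1}\to\tfrac{2-p}{p-1}\to\tfrac{3-2p}{p-1}$ and the leading constants are $\sqrt\alpha$, $-\tfrac{\sqrt\alpha}{p-1}$, $\tfrac{\sqrt\alpha(2-p)}{(p-1)^2}$, giving the expansions of $\sqrt M$, $(\sqrt M)'$, $(\sqrt M)''$.

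The only genuinely delicate point is the first step: legitimizing differentiation under the integral sign in $f$ and checking the convergence of the resulting integrals at the endpoint $z=1$. The device that turns this into a routine matter is the factorization $1-z^2+\tfrac{2}{p+1}A(1-z^{p+1})=(1-z)G(z,A)$ with $G$ bounded below on $[0,1]\times[0,\infty)$, which isolates the fixed integrable singularity $(1-z)^{-1/2}$ and leaves a factor that is smooth in $A$ uniformly in $z$. Once that is in place, the remainder is the inverse/implicit function theorem, the Hadamard factorization of $h$, and the product rule computation above, and I do not expect any further obstruction.
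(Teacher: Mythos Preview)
Your proof is correct and follows essentially the same route as the paper: both write $L(c)=F\bigl(\text{const}\cdot N(c)^{p-1}\bigr)$ with $F$ smooth on $[0,\infty)$ (via the same factorization isolating the $(1-z)^{-1/2}$ singularity), invert to get $M=\Phi\circ F^{-1}$ with $\Phi(w)\sim\text{const}\cdot w^{2/(p-1)}$, and read off the expansions. The only cosmetic difference is that you package the final step through the Hadamard factorization $h(s)=(2\pi-s)k(s)$ to obtain $M(s)=(2\pi-s)^{2/(p-1)}P(s)$ with $P$ smooth and then differentiate once, whereas the paper computes $M,M',M''$ separately by chain rule on $\Phi\circ F^{-1}$ and then derives the $\sqrt{M}$ expansions from those.
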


\begin{proof}
Let us begin by recalling from \eqref{L_darstellung_plus} that
$$
L(c) = F\left(\frac{2}{p+1} N(c)^{p-1}\right), 
$$
where 
\begin{align*}
F(w)&= 4 \int_0^1 \frac{1}{\sqrt{1-z^2+ w(1-z^{p+1})}} \,dz \\
&= 4\int_0^1 \frac{1}{\sqrt{1-z^2}\sqrt{1 +w \kappa(z)}}\,dz\quad\mbox{ with } \kappa(z)=\frac{1-z^{p+1}}{1-z^2} \mbox{ and } w\geq 0.
\end{align*}
Since $\kappa$ is a continuous and positive function on $[0,1]$ we find that $F\in C^\infty[0,\infty)$, $F(0)=2\pi$, $F(\infty)=0$ and $F$ is strictly decreasing and convex with 
\begin{align}
F'(w)&= -2\int_0^1 \frac{\kappa(z)}{\sqrt{1-z^2}(1 +w \kappa(z))^\frac{3}{2}}\,dz<0,  \label{f_prime_plus} \\
F''(w) &= 3\int_0^1 \frac{\kappa^2(z)}{\sqrt{1-z^2}(1 +w \kappa(z))^\frac{5}{2}} \,dz>0 \mbox{ for } w\in [0,\infty).
\end{align}
Thus $F^{-1} \in C^\infty((0,2\pi])$.

\medskip

Our objective is to study $L^{-1}$. Recall from the defining equation for $N(c)$ that for $w=\frac{2}{p+1} N(c)^{p-1}$ one has the relation
\begin{align*}
c = N(c)^2+ \frac{2}{p+1} N(c)^{p+1} &= \left(\frac{p+1}{2}\right)^\frac{2}{p-1} \left(w^\frac{2}{p-1} + w^\frac{p+1}{p-1}\right) \\
&=: \Phi(w).
\end{align*}
This leads to the representation
$$
L(c) = F(\Phi^{-1}(c)) \quad \mbox{ and } \quad 
M = L^{-1} = \Phi\circ F^{-1}.
$$
Via Taylor-approximation with $\tilde\alpha=-1/F'(0)>0$, $\tilde\beta= F''(0)>0$ we obtain as $s\to 2\pi-$
\begin{align*}
F^{-1}(s) &= (F^{-1})'(2\pi)(s-2\pi) + O((2\pi-s)^2) = \tilde\alpha(2\pi-s)(1+O(2\pi-s)), \\
(F^{-1})'(s) &= \frac{1}{F'(F^{-1}(s))} = -\tilde\alpha(1+O(2\pi-s)), \\
(F^{-1})''(s) &= -\frac{F''\bigl(F^{-1}(s)\bigr)}{\bigl(F'(F^{-1}(s))\bigr)^3} = \tilde\beta\tilde\alpha^3(1+O(2\pi-s)).
\end{align*}
Hence as $s\to 2\pi-$ we obtain
$$
M(s) = \Phi(F^{-1}(s)) = \alpha(2\pi-s)^\frac{2}{p-1}(1+O(2\pi-s))
$$
for $\alpha=  \left(\frac{p+1}{2}\right)^\frac{2}{p-1} \tilde\alpha^\frac{2}{p-1} >0$, 
\begin{align*}
M'(s) & = \Phi'(F^{-1}(s)) (F^{-1})'(s) \\
& = \left(\frac{p+1}{2}\right)^\frac{2}{p-1}\left( \frac{2}{p-1} (F^{-1}(s))^\frac{3-p}{p-1}+\frac{p+1}{p-1}(F^{-1}(s))^\frac{2}{p-1}\right)(-\tilde\alpha)(1+O(2\pi-s)) \\
& = \frac{-2\alpha}{p-1}(2\pi-s)^\frac{3-p}{p-1}(1+O(2\pi-s)) 
\end{align*}
and 
\begin{align*}
M''(s) =&  \Phi''(F^{-1}(s)) \Bigl((F^{-1})'(s)\Bigr)^2 + \Phi'(F^{-1}(s)) (F^{-1})''(s) \\
=& \left(\frac{p+1}{2}\right)^\frac{2}{p-1}\left( \frac{2(3-p)}{(p-1)^2} (F^{-1}(s))^\frac{4-2p}{p-1} + \frac{2(p+1)}{(p-1)^2}(F^{-1}(s))^\frac{3-p}{p-1}\right)\tilde\alpha^2(1+O(2\pi-s))\\
& + O\bigl((2\pi-s)^\frac{3-p}{p-1}\bigr) \\
=& \frac{2\alpha(3-p)}{(p-1)^2}(2\pi-s)^\frac{4-2p}{p-1}(1+O(2\pi-s)).
\end{align*}
The expansions for $\sqrt{M}, {\sqrt{M}\,}' = \frac{M'}{2\sqrt{M}}$ and ${\sqrt{M}\,}'' = \frac{1}{2 M^{3/2}}\left(M''M-\frac{1}{2} (M')^2\right)$ follow directly from the expansions for $M, M', M''$. 
\end{proof}

\begin{lemma}[Expansion of $M=L^{-1}$ for \eqref{ode_y_minus}] \label{expand_minus}
$M: [2\pi,\infty) \to [0,\frac{p-1}{p+1})$ is $C^\infty$ on $(2\pi,\infty)$ and has the following expansions as $s \to 2\pi+$
\begin{align*}
M(s) & = \alpha (s-2\pi)^\frac{2}{p-1}(1+O(s-2\pi)), \\
\sqrt{M(s)} &= \sqrt{\alpha}(s-2\pi)^\frac{1}{p-1}(1+O(s-2\pi)), \\
M'(s) &= \frac{2\alpha}{p-1}(s-2\pi)^\frac{3-p}{p-1}(1+ O(s-2\pi)), \\
{\sqrt{M(s)}\,}' &= \frac{\sqrt{\alpha}}{p-1} (s-2\pi)^\frac{2-p}{p-1}(1+ O(s-2\pi)), \\
M''(s) &= \frac{2\alpha(3-p)}{(p-1)^2} (s-2\pi) ^\frac{4-2p}{p-1}(1+O(s-2\pi)),\\
{\sqrt{M(s)}\,}'' &= \frac{\sqrt{\alpha}(2-p)}{(p-1)^2} (s-2\pi)^\frac{3-2p}{p-1}(1+ O(s-2\pi))
\end{align*}
with the same constant $\alpha>0$ as in Lemma~\ref{expand_plus}.
\end{lemma}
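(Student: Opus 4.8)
The plan is to follow the proof of Lemma~\ref{expand_plus} essentially line by line, tracking the sign changes caused by the minus nonlinearity. First I would rewrite the period function: by \eqref{L_darstellung_minus},
$$
L(c) = F_-\Bigl(\tfrac{2}{p+1}N(c)^{p-1}\Bigr), \qquad F_-(w) := 4\int_0^1 \frac{dz}{\sqrt{1-z^2}\,\sqrt{1-w\kappa(z)}}, \quad \kappa(z)=\frac{1-z^{p+1}}{1-z^2},
$$
which is the plus-case formula with $1-w\kappa(z)$ in place of $1+w\kappa(z)$. The function $\kappa$ is continuous and positive on $[0,1)$ with $\sup_{[0,1)}\kappa=\lim_{z\to1-}\kappa(z)=\frac{p+1}{2}$ (the bound $\kappa(z)<\frac{p+1}{2}$ amounts to $\tfrac{p-1}{2}+z^{p+1}-\tfrac{p+1}{2}z^2>0$ on $[0,1)$, an elementary estimate), so $F_-$ is well defined, $C^\infty$ and strictly increasing on $[0,\frac{2}{p+1})$ with $F_-(0)=2\pi$; moreover $F_-(w)\to\infty$ as $w\to\frac{2}{p+1}-$, using $1-z^2-\frac{2}{p+1}(1-z^{p+1})=(p-1)(1-z)^2(1+o(1))$ as $z\to1$ (as in the proof of Lemma~\ref{phase_plane_minus}). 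Differentiating under the integral sign,
\begin{equation}
F_-'(w)=2\int_0^1 \frac{\kappa(z)}{\sqrt{1-z^2}\,(1-w\kappa(z))^{3/2}}\,dz>0, \qquad F_-''(w)=3\int_0^1 \frac{\kappa^2(z)}{\sqrt{1-z^2}\,(1-w\kappa(z))^{5/2}}\,dz>0,
\label{f_prime_minus}
\end{equation}
both finite on $[0,\frac{2}{p+1})$, so $F_-^{-1}\in C^\infty\bigl((2\pi,\infty)\bigr)$.

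Next I would repeat the reparametrisation. By \eqref{def_N_minus}, writing $w=\frac{2}{p+1}N(c)^{p-1}$ gives
$$
c=N(c)^2-\tfrac{2}{p+1}N(c)^{p+1}=\Bigl(\tfrac{p+1}{2}\Bigr)^{\!2/(p-1)}\bigl(w^{2/(p-1)}-w^{(p+1)/(p-1)}\bigr)=:\Phi_-(w),
$$
so that $L=F_-\circ\Phi_-^{-1}$ and $M=L^{-1}=\Phi_-\circ F_-^{-1}$, with $\Phi_-(w)=\bigl(\tfrac{p+1}{2}\bigr)^{2/(p-1)}w^{2/(p-1)}(1+O(w))$ as $w\to0$. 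With $\tilde\alpha:=1/F_-'(0)>0$ and $\tilde\beta:=F_-''(0)>0$, Taylor expansion of $F_-^{-1}$ at $2\pi$ yields, as $s\to2\pi+$,
$$
F_-^{-1}(s)=\tilde\alpha(s-2\pi)(1+O(s-2\pi)),\quad (F_-^{-1})'(s)=\tilde\alpha(1+O(s-2\pi)),\quad (F_-^{-1})''(s)=-\tilde\beta\tilde\alpha^3(1+O(s-2\pi)).
$$
The key bookkeeping point is that $\kappa$ is literally the kernel appearing in the proof of Lemma~\ref{expand_plus}, so $F_-'(0)=2\int_0^1\kappa(z)(1-z^2)^{-1/2}\,dz=-F'(0)$ with $F$ the plus-case function there; hence $\tilde\alpha$ coincides with the constant $\tilde\alpha$ of that proof, and therefore $\alpha:=\bigl(\tfrac{p+1}{2}\bigr)^{2/(p-1)}\tilde\alpha^{2/(p-1)}>0$ is exactly the constant from Lemma~\ref{expand_plus}, as the statement requires.

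Finally I would substitute these expansions into $M=\Phi_-\circ F_-^{-1}$ and differentiate, retaining only the dominant power of $(s-2\pi)$ at each step, using $M(s)=\Phi_-(F_-^{-1}(s))$, $M'(s)=\Phi_-'(F_-^{-1}(s))(F_-^{-1})'(s)$ and $M''(s)=\Phi_-''(F_-^{-1}(s))\bigl((F_-^{-1})'(s)\bigr)^2+\Phi_-'(F_-^{-1}(s))(F_-^{-1})''(s)$ --- the same three identities as in the proof of Lemma~\ref{expand_plus}. The minus sign inside $\Phi_-$ is invisible at leading order because the $w^{2/(p-1)}$ term dominates; the new feature is $(F_-^{-1})'>0$ (versus $<0$ in the plus case), which turns the leading sign of $M'$, hence of ${\sqrt{M}}'$, from negative to positive; and the term $\Phi_-'(F_-^{-1}(s))(F_-^{-1})''(s)$ in $M''$ is of order $(s-2\pi)^{(3-p)/(p-1)}$, strictly subleading to $(s-2\pi)^{(4-2p)/(p-1)}$ since $p>1$. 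This yields the claimed expansions for $M$, $M'$, $M''$. The expansions for $\sqrt{M}$, ${\sqrt{M}}'=M'/(2\sqrt{M})$ and ${\sqrt{M}}''=\bigl(M''M-\tfrac12(M')^2\bigr)/(2M^{3/2})$ then follow by the same elementary algebra as before; in the last one $M''M$ and $(M')^2$ are both of order $(s-2\pi)^{(6-2p)/(p-1)}$ and their combination lowers the leading coefficient from $3-p$ to $2-p$, giving ${\sqrt{M}}''=\frac{\sqrt{\alpha}(2-p)}{(p-1)^2}(s-2\pi)^{(3-2p)/(p-1)}(1+O(s-2\pi))$.

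The step I would flag as the main (though still mild) point of care is establishing the global behaviour of $F_-$ on $[0,\frac{2}{p+1})$: smoothness and strict monotonicity near $w=0$, together with the blow-up $F_-(w)\to\infty$ as $w\to\frac{2}{p+1}-$. The blow-up is in fact not needed for the expansions --- which only probe $F_-$ near $w=0$, where it is smooth --- and was in any case already used in Lemma~\ref{phase_plane_minus}. Everything else is a sign-aware transcription of the proof of Lemma~\ref{expand_plus}; in particular, the assertion that $\alpha$ is the plus-case constant is immediate from $F_-'(0)=|F'(0)|$, the two kernels differing only by the sign in front of $w\kappa(z)$.
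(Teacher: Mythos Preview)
Your proposal is correct and follows essentially the same route as the paper: rewrite $L$ as $F_-\circ\Phi_-^{-1}$ with the same $F_-$ and $\Phi_-$, Taylor-expand $F_-^{-1}$ at $2\pi$, and compose. Your additional remarks (the explicit check that $\kappa<\tfrac{p+1}{2}$ on $[0,1)$, the identification $F_-'(0)=-F'(0)$ explaining why $\tilde\alpha$ and hence $\alpha$ coincide with the plus case, and the order-tracking in $M''$) make explicit what the paper states more tersely, but there is no methodological difference.
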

\begin{proof}
Let us begin by recalling from \eqref{L_darstellung_minus} that
$$
L(c) = F\left(\frac{2}{p+1} N(c)^{p-1}\right), 
$$
where 
\begin{align*}
F(w) &= 4 \int_0^1 \frac{1}{\sqrt{1-z^2-w(1-z^{p+1})}} \,dz \\
&= 4\int_0^1 \frac{1}{\sqrt{1-z^2}\sqrt{1 -w \kappa(z)}}\,dz\quad\mbox{ with } \kappa(z)=\frac{1-z^{p+1}}{1-z^2}.
\end{align*}
Since $\kappa$ is a continuous function on $[0,1]$ which takes values only in $[1,\frac{p+1}{2}]$ we find that $F(w)$ is well defined for $w\in [0,\frac{2}{p+1})$, $F\in C^\infty[0,\frac{2}{p+1})$ and $F(0)=2\pi$. Moreover, the Taylor-expansion of $\kappa$ at $1$ yields 
$$
\kappa(z) = \frac{p+1}{2} + \frac{p^2-1}{4}(z-1)(1+o(1)) \mbox{ as } z \to 1-
$$
so that $\lim_{w\to \frac{2}{p+1}} F(w)=\infty$. Finally, $F$ is strictly increasing and convex with 
\begin{align}
F'(w)&= 2\int_0^1 \frac{\kappa(z)}{\sqrt{1-z^2}(1 -w \kappa(z))^\frac{3}{2}}\,dz>0, \label{f_prime_minus} \\
F''(w) &= 3\int_0^1 \frac{\kappa^2(z)}{\sqrt{1-z^2}(1 -w \kappa(z))^\frac{5}{2}} \,dz>0
\mbox{ for } w\in [0,\frac{2}{p+1})
\end{align}
and hence $F^{-1} \in C^\infty\bigl([2\pi,\infty)\bigr)$.  

\medskip

Our objective is to study $L^{-1}$. Recall from the defining equation for $N(c)$ that for $w=\frac{2}{p+1} N(c)^{p-1}$ one has the relation
\begin{align*}
c = N(c)^2- \frac{2}{p+1} N(c)^{p+1} &= \left(\frac{p+1}{2}\right)^\frac{2}{p-1} \left(w^\frac{2}{p-1} - w^\frac{p+1}{p-1}\right) \\
&=: \Phi(w).
\end{align*}
This leads to the representation
$$
L(c) = F(\Phi^{-1}(c)) \quad \mbox{ and } \quad 
M = L^{-1} = \Phi\circ F^{-1}.
$$
Via Taylor-approximation and $\tilde\alpha:=1/F'(0)>0$, $\tilde\beta:= F''(0)>0$ having the same values as in the proof of Lemma~\ref{expand_plus} we obtain as $s\to 2\pi+$
\begin{align*}
F^{-1}(s) &= \tilde\alpha(s-2\pi)(1+O(s-2\pi)), \\
(F^{-1})'(s) &= \tilde\alpha(1+O(s-2\pi)), \\
(F^{-1})''(s) &= -\tilde\beta\tilde\alpha^3(1+O(s-2\pi)).
\end{align*}
Hence as $s\to 2\pi+$ we obtain
$$
M(s) = \Phi(F^{-1}(s)) = \alpha(s-2\pi)^\frac{2}{p-1}(1+O(s-2\pi))
$$
for $\alpha=  \left(\frac{p+1}{2}\right)^\frac{2}{p-1} \tilde\alpha^\frac{2}{p-1} >0$, 
\begin{align*}
M'(s) & = \Phi'(F^{-1}(s)) (F^{-1})'(s) \\
& = \left(\frac{p+1}{2}\right)^\frac{2}{p-1}\left( \frac{2}{p-1} (F^{-1}(s))^\frac{3-p}{p-1}-\frac{p+1}{p-1}(F^{-1}(s))^\frac{2}{p-1}\right)\tilde\alpha(1+O(s-2\pi)) \\
& = \frac{2\alpha}{p-1}(s-2\pi)^\frac{3-p}{p-1}(1+O(s-2\pi)) 
\end{align*}
and 
\begin{align*}
M''(s) =&  \Phi''(F^{-1}(s)) \Bigl((F^{-1})'(s)\Bigr)^2 + \Phi'(F^{-1}(s)) (F^{-1})''(s) \\
=& \left(\frac{p+1}{2}\right)^\frac{2}{p-1}\left( \frac{2(3-p)}{(p-1)^2} (F^{-1}(s))^\frac{4-2p}{p-1} - \frac{2(p+1)}{(p-1)^2}(F^{-1}(s))^\frac{3-p}{p-1}\right)\tilde\alpha^2(1+O(s-2\pi))\\
& + O\bigl((s-2\pi)^\frac{3-p}{p-1}\bigr) \\
=& \frac{2\alpha(3-p)}{(p-1)^2}(s-2\pi)^\frac{4-2p}{p-1}(1+O(s-2\pi)).
\end{align*}
As before, the expansions for $\sqrt{M}, {\sqrt{M}\,}' = \frac{M'}{2\sqrt{M}}$ and ${\sqrt{M}\,}'' = \frac{1}{2 M^{3/2}}\left(M''M-\frac{1}{2} (M')^2\right)$ follow directly from the expansions for $M, M', M''$. 
\end{proof}

\section*{Acknowledgments}
We gratefully acknowledge financial support by the Deutsche Forschungs\-gemeinschaft (DFG) through CRC 1173.

\bibliographystyle{plain}	
\bibliography{bibliography}

\end{document}